\newtheorem{thm}{Theorem}
\newtheorem{prop}{Proposition}[section]
\newtheorem{lem}{Lemma}[section]
\newtheorem{cor}{Corollary}[section]
\newtheorem{fact}{Fact}[section]
\newtheorem{con}{Conjecture}
\newtheorem{que}{Question}
\newtheorem{rmk}{Remark}[section]
\newtheorem{thm_proof}{Theorem}
\theoremstyle{definition}
\newtheorem{exm}{Example}
\newtheorem{dfn}{Definition}
\def \tp {{\rm tp}}
\def\cl{{\rm cl}}
\def \cl {{\rm cl}}
\def \dom {{\rm dom}}
\title{Around Podewski's conjecture}
\author{Krzysztof Krupi\'nski\thanks{Research supported by the Polish Government grant N N201
545938.},
 Predrag Tanovi\'c\thanks{Supported by the Ministry of Education and
Science of Serbia},
 Frank O. Wagner\thanks{Partially supported by the Agence Nationale
de la Recherche under project ANR-09-BLAN-0047.}}
\begin{document}
\maketitle

\begin{abstract}
A long-standing conjecture of Podewski states that every minimal
field is algebraically closed. Known in positive characteristic,
it remains wide open in characteristic zero. We reduce Podewski's
conjecture to the (partially) ordered case, and we conjecture that
such fields do not exist. We prove the conjecture in case the
incomparability relation is transitive (the almost linear case).

We also study minimal groups with a (partial) order, and give a
complete classification of almost linear minimal groups as certain
valued groups.\end{abstract}


Recall that an infinite first-order structure is {\em
minimal} if every definable (with parameters) subset is either
finite or co-finite (of finite complement). Minimal pure
groups were classified by Reineke \cite{R}; they are either
abelian divisible with only finitely many elements of any given
finite order, or elementary abelian of prime exponent. As for
minimal fields, it is well-known that every algebraically closed
pure field is minimal; the converse was predicted by Podewski
\cite{Po} forty years ago:

\begin{con}\label{C1}
A minimal field is algebraically closed.
\end{con}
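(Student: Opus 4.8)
\medskip
\noindent\textbf{Proof proposal.} The natural approach separates the characteristics. In characteristic $p>0$, Conjecture~\ref{C1} is a theorem. A minimal structure is small (there are only countably many types over any finite set: since definable subsets of $K$ are finite or cofinite, over each parameter set there is a unique non-algebraic $1$-type, and one concludes by induction on the number of variables), and a small field of positive characteristic is algebraically closed by Wagner's theorem on small fields. The underlying mechanism is already visible on the arithmetic of $K$: minimality forces $K$ to be perfect (the Frobenius is injective, so its image is infinite, hence cofinite, and it is a subfield), forces every element of $K^\times$ to be an $n$-th power for each $n$ (the map $x\mapsto x^n$ is finite-to-one, so $(K^\times)^n$ is an infinite --- hence cofinite --- subgroup of $K^\times$, hence all of it), and forces the Artin--Schreier map $\wp(x)=x^p-x$ to be onto, by the same argument applied to $(K,+)$. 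So from now on assume $\char K=0$.

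In characteristic zero the same power-map argument makes both $(K,+)$ and $K^\times$ divisible; in particular every element of $K$ is a square, so $-1$ is a square and $K$ is not formally real. By the Artin--Schreier theorem, if $1<[\bar K:K]<\infty$ then $[\bar K:K]=2$ and $K$ is real closed, hence formally real; this is now excluded, so a minimal, non-algebraically-closed field has infinitely many finite extensions. A further consequence of minimality, used below, is that $K$ carries no definable nontrivial valuation: its valuation ring would be definable and infinite (it contains $\mathbb Z$), hence cofinite, whereas $x\mapsto x^n$ sends any element of negative value to infinitely many distinct elements of negative value, so the complement of the valuation ring would be infinite. The first main step is then a reduction to the \emph{ordered case}: a minimal, non-algebraically-closed field of characteristic zero must interpret an infinite definable partial order on $K$ compatible with addition. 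I would extract this from an analysis of the finite separable extensions of $K$ (each interpretable in $K$) and of the profinite group $\mathrm{Gal}(\bar K/K)$: although $K$ itself has no definable valuation, a suitable finite extension should, and the order on its value set, together with a translation-invariance argument inside $(K,+)$, yields the desired definable partial order.

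It then remains to show that the ordered case is vacuous, i.e.\ to rule out a minimal field carrying such an order. The plan is to pass to the additive group and classify \emph{minimal ordered groups}. When the incomparability relation is transitive --- the \emph{almost linear} case --- this classification should be clean: the order is controlled by a group valuation $w\colon(K,+)\to\Lambda$ with $\Lambda$ linearly ordered, so the underlying object is a certain valued abelian group (built from $\mathbb Q$-vector space and Pr\"ufer-type pieces). But in characteristic zero $(K,+)$ is torsion-free, and minimality forces its only definable subgroups to be $\{0\}$ and $K$; any such $w$ is therefore trivial, the order degenerates, and we reach a contradiction. (In positive characteristic there is nothing to prove, and in general one checks that the classified valued groups never underlie a field.) The main obstacle --- and the reason Podewski's conjecture remains open --- is the general partially ordered case: once incomparability fails to be transitive the order is no longer reducible to a linearly ordered value set, there is no classification of the corresponding minimal groups, and the valuation-theoretic contradiction above is unavailable. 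It is exactly this situation that we cannot eliminate, and that we instead conjecture to be empty.
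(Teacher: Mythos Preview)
Your overall architecture matches the paper's: reduce to the ordered case, dispose of the almost linear subcase, and admit that the general partially ordered case is the open residue. But the crucial reduction step is where your proposal and the paper diverge, and where your argument has a genuine gap.

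The paper's reduction is purely model-theoretic and does not use any field theory beyond Lemma~\ref{L11} (the generic type of a minimal group is definable). From definability of $p$ one gets a unique heir over any extension, and Proposition~\ref{P1} gives a dichotomy: either generic Morley sequences are totally indiscernible (the \emph{symmetric} case), or one can explicitly manufacture a definable partial order with $M<a<b$ from any formula $\phi(a,y)\in\tp(b/Ma)$ not realised in $M$. The symmetric case is then handled by showing that $\cl$ is a pregeometry (Proposition~\ref{exchange}) and running a Wheeler-style argument with a Vandermonde matrix to force all roots of the minimal polynomial into $\bar K$. Nothing about $\mathrm{Gal}(\bar K/K)$ or valuations on finite extensions enters.

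Your reduction, by contrast, asserts that a non-algebraically-closed minimal field of characteristic zero ``should'' have a finite extension carrying a definable valuation, from which you would pull back an order. No mechanism is offered for producing that valuation; the sentence ``a suitable finite extension should'' is exactly the missing idea. Without it you have not shown that the unordered (symmetric) case is algebraically closed, which is the actual content of Theorem~\ref{T2}. Note also that your preliminary observation that $K$ itself carries no definable valuation works against you here: you would need to explain why passing to a finite extension changes this, and why the resulting order descends to something definable in $K$.

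Two smaller remarks. Your positive-characteristic paragraph is fine in spirit (minimal $\Rightarrow$ small $\Rightarrow$ algebraically closed in characteristic $p$), and the smallness induction you sketch is correct; the paper simply cites \cite{W} directly. Your almost-linear argument---that in characteristic zero the balls of the associated group valuation are definable subgroups of the torsion-free group $(K,+)$, hence trivial---is a clean shortcut to the conclusion of Theorem~\ref{T1} in the field case, slightly more direct than the paper's route through Lemmas~\ref{lincom}--\ref{lincom_gen} and Theorem~\ref{T3}.
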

It was shown by the third author in positive characteristic \cite{W}, but the
characteristic zero case remains one of the oldest unsolved problems in algebraic model theory.

We shall say that a structure is {\em ordered} if it has a definable
strict partial order on singletons which is not definable from equality.
Otherwise the structure is {\em unordered}. Please note that in this paper ordered fields or groups are ordered structures in the above sense rather than in the usual algebraic sense.
\begin{lem} A minimal ordered structure $M$ has an infinite chain.\end{lem}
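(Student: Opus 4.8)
The plan is to argue by contradiction. Suppose $M$ carries a definable strict partial order $<$ which is not definable from equality, yet $M$ has no infinite chain; I will deduce that $<$ must after all be definable from equality, a contradiction. For $x\in M$ let $h(x)$ be the supremum of the sizes of finite $<$-chains whose greatest element is $x$ (so $h(x)\in\{1,2,\dots\}\cup\{\infty\}$). For each $n$ the set $B_n:=\{x:h(x)\ge n\}$ is definable over the parameters defining $<$, hence finite or cofinite by minimality, and $B_1=M$. The argument has two parts: first, that chain lengths must be bounded; second, that a minimal structure with a definable partial order of finite height has that order definable from equality.

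\textbf{Step 1: chain lengths are bounded.} If for some $n$ the set $B_n$ is cofinite while $B_{n+1}$ is finite, then $h^{-1}(n)=B_n\setminus B_{n+1}$ is cofinite; since heights are strictly increasing along a chain, a chain meets $h^{-1}(n)$ at most once, so every chain has at most $1+|M\setminus h^{-1}(n)|$ elements and we are done. Otherwise every $B_n$ is cofinite, so every finite level $h^{-1}(k)=B_k\setminus B_{k+1}$ is finite, and I claim one can then build an infinite chain, contradicting the assumption. If each level $h^{-1}(j)$ is nonempty, then for every $k$ there is a chain $c_1<\dots<c_k$ with $h(c_i)=i$ (along a longest chain ending at a point of height $k$ the heights are forced to run through $1,2,\dots,k$), and since the levels are finite the tree of initial segments of these chains is finitely branching, so K\"onig's lemma yields an infinite chain. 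If instead some level $h^{-1}(j)$ is empty, then no element has finite height $\ge j$, so $\{x:h(x)=\infty\}$ is cofinite; starting from a point of infinite height, whose set of predecessors is infinite hence cofinite, one repeatedly passes to a predecessor of infinite height and obtains an infinite descending chain. I expect this step to be the crux of the proof: it is exactly where minimality must be used to exclude the ``infinitely branching, all chains finite'' configuration, and doing so seems to require this two-case K\"onig/descending argument rather than a one-line observation.

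\textbf{Step 2: finite height forces definability from equality.} This I would prove, as a statement about an arbitrary minimal structure with a definable partial order, by induction on the maximal chain length $N$; together with Step 1 it finishes the lemma. For $N=1$ the order is empty. For $N\ge 2$, the definable sets $h^{-1}(1),\dots,h^{-1}(N)$ partition $M$, so exactly one level, say $h^{-1}(k_0)$, is cofinite and the union $E$ of the others is finite. Examining the possible pairs $(h(x),h(y))$ for a related pair $x<y$ (where $h(x)<h(y)$), one sees that unless some $L_w:=\{z:z<w\}$ or $U_w:=\{z:w<z\}$ with $w\in E$ is cofinite, there are only finitely many related pairs in all, so $<$ is a finite disjunction of equalities $x=a\wedge y=b$ and we are done. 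In the remaining case, say $L_w$ cofinite, the induced structure on $L_w$ is again minimal and its order has height $\le N-1$, so by induction that order is definable from equality over some finite $F_0$; antisymmetry forces every related pair inside $L_w$ to meet $F_0$, hence every related pair of $M$ to meet the finite set $G:=F_0\cup(M\setminus L_w)$, and then — writing each $U_a$ and $L_a$ for $a\in G$ as an equality formula, which is possible since these sets are definable and so finite or cofinite — one assembles an explicit equality-definition of $<$ on $M$. This contradicts the hypothesis that $M$ is ordered, so $M$ must contain an infinite chain.
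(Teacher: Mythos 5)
Your argument is correct, but it takes a genuinely different route from the paper's. You argue by contradiction via the height stratification $B_n=\{x:\exists x_1<\dots<x_{n-1}<x\}$: if the heights are unbounded you produce an infinite chain either by K\"onig's lemma (all finite levels nonempty and finite) or by descending through predecessors of points of infinite height, and if they are bounded you prove, by induction on the bound, that a definable partial order of finite height on a minimal structure is definable from equality — the key steps being that all related pairs must meet a finite set (via the transposition/antisymmetry argument applied to the equality-definition on a cofinite $L_w$) and that finite-or-cofinite up- and down-sets of finitely many points can be written as equality formulas. The paper instead argues directly: if there is no chain of type $\omega^*$ the order is well-founded, the set of minimal elements must be finite (else the order would be equality-definable, exactly the kind of assembly step you use), so some minimal element $x_0$ has cofinite strict up-set $X_0$; since the order on a cofinite set cannot be equality-definable either, the induced minimal ordered structure on $X_0$ admits the same argument, and iterating builds an increasing $\omega$-chain greedily. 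So the two proofs share the crucial use of minimality (up/down sets are finite or cofinite, and an order supported on finitely many such sets is equality-definable), but the paper's greedy construction avoids the height function, the K\"onig's lemma case, and the induction on the height bound, making it considerably shorter; your version is longer but isolates a reusable intermediate statement (equality-definability of bounded-height definable orders on minimal structures) and makes the case analysis fully explicit.
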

\begin{proof} If there is a chain of order type $\omega^*$, the reverse order of $\omega$, we are done. So assume that there is no such chain; we shall construct inductively a chain of order type $\omega$.

Suppose the set $X$ of minimal elements is co-finite. Then the elements of $X$ are incomparable. By minimality, for each $y\in M\setminus X$ the sets of elements less than $y$, bigger than $y$ and incomparable to $y$ are definable from equality. Hence the partial order is definable from equality, a contradiction.

Therefore the set of minimal elements is finite. If it is empty, there is a chain of order type $\omega^*$, a contradiction. Hence for some minimal element $x_0$ the set $X_0=\{y:x_0<y\}$ must be infinite, and hence co-finite by minimality. But if we can define the order on some co-finite set from equality, then, by minimality, we can define the order on $M$ from equality. By induction, we obtain an infinite increasing chain.\end{proof}

The minimal total orders are just $(\omega+n,<)$,
$((\omega+n)^*,<)$ and $(\omega+\omega^*,<)$. Most of
the known non-linear, ordered, minimal structures derive from them
by replacing elements by sufficiently large finite antichains and
then adding a finite set arbitrarily. For example, consider
$\{(n,m)\in\omega\times\omega \,|\,m\leq n\}$ and order it by
$(n,m)<(n',m')$ iff $n<n'$. Here, the order is not far from being
linear, namely the incomparability relation (defined by
$x\sim y$ iff $\neg(x<y\vee y<x)$) is transitive and hence an equivalence relation; after factoring it out we end up with $(\omega,<)$. The
minimal structures in which there is a definable order with an infinite chain
such that incomparability is transitive will be called {\em almost
linear} (see Definition \ref{D1} and Remark \ref{Rem1}).

In Section \ref{Section 3}, we prove Conjecture \ref{C1} for unordered fields:
\begin{thm}\label{T2} A minimal unordered field is algebraically closed.
\end{thm}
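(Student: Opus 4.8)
The plan is to prove the contrapositive: a minimal field $K$ which is not algebraically closed is ordered, i.e.\ carries a definable strict partial order on singletons not definable from equality. In positive characteristic Wagner's theorem \cite{W} already makes $K$ algebraically closed, so the statement is vacuous there; hence assume $\char K=0$. Both $(K,+)$ and $(K^{\times},\cdot)$ are then minimal pure groups --- each is a cofinite subset of $K$ whose operation is $K$-definable, so any subset definable in the group is $K$-definable and therefore finite or cofinite. By Reineke's classification \cite{R}, $(K,+)$ is torsion-free divisible and $(K^{\times},\cdot)$ is divisible (it is not of prime exponent, having $-1$ as its unique element of order $2$). In particular $K^{\times}=(K^{\times})^{n}$ for every $n$; with $n=2$, $a=-1$ this gives $\sqrt{-1}\in K$, so $K$ is not formally real, hence not real closed. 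As $K$ is neither algebraically nor real closed, the Artin--Schreier theorem forces $[\bar K:K]=\infty$, and running Artin--Schreier over the fixed field of a putative element of prime order of $G_K:=\mathrm{Gal}(\bar K/K)$ shows in the same way that $G_K$ is an infinite torsion-free profinite group. I also note that, by divisibility, $K$ has no proper definable subgroup of finite index in $K^{+}$ or in $K^{\times}$.

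I would next reduce the goal to producing a nontrivial definable valuation: if $v$ is such a valuation, with $K$-definable valuation ring $\mathcal O_v$ and maximal ideal $\mathfrak m_v$, then $x\prec y:\Leftrightarrow(x\neq 0\ \wedge\ y/x\in\mathfrak m_v)$ is a definable strict partial order on singletons which is not definable from equality because $v$ is nontrivial. To obtain such a $v$, first observe that $K$ must be unstable: were $K$ stable, its unique non-algebraic $1$-type would have $U$-rank $1$, so every element of $K$ would have $U$-rank $\le 1$ and every finite tuple finite $U$-rank, whence $\Th(K)$ would be superstable and $K$ algebraically closed by Cherlin--Shelah, a contradiction. (One must be careful that minimality is assumed only over the home model, so this step should be carried out using only that, or replaced by an explicit instability witness.) Then I would combine this instability with the multiplicative and additive rigidity just recorded and with ``$K$ not algebraically closed'' to extract definable valuative data --- for instance, passing to the finite (hence interpretable) extension $K(\zeta_p)$ for a suitable prime $p$, invoking there the canonical $p$-henselian valuation, which is definable and nontrivial as soon as the field is not $p$-henselian closed, and restricting it to $K$. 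The associated valuation order then witnesses that $K$ is ordered.

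The main obstacle is exactly this last step: the passage from a Galois-theoretic fact ($G_K\neq 1$, in fact $G_K$ infinite and torsion-free) to the \emph{definability} of an order. The algebraic inputs --- Reineke, Artin--Schreier, the structure of $G_K$ --- are routine, but everything hinges on making a valuation ring definable, and it is here that minimality has to be used in an essential way, e.g.\ to exclude $G_K$ being topologically perfect or otherwise wild, which is precisely what would obstruct a definable valuation. If no valuation is forthcoming, a fallback would be to prove directly that a minimal unstable structure is ordered, reducing an order on tuples witnessing instability to one on singletons by a minimal-complexity argument; but that sits at the same level of difficulty.
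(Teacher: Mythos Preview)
Your main line of attack cannot succeed as stated. A minimal field can never carry a nontrivial definable valuation: if $\mathcal O_v$ were definable, it would have to be finite or cofinite, but in characteristic zero $\mathcal O_v\supseteq\mathbb Z$ is infinite, while for any $a$ with $v(a)>0$ the elements $a^{-1},a^{-2},\ldots$ are distinct and lie outside $\mathcal O_v$, so $\mathcal O_v$ is also co-infinite. Equivalently, your proposed order $\prec$ would make $\mathfrak m_v=\{y:1\prec y\}$ definable, again infinite and co-infinite. So the step you flag as ``the main obstacle'' is not merely incomplete --- it contradicts minimality. The Galois/Artin--Schreier/$p$-henselian machinery you assemble is correct as field theory, but it cannot land inside a minimal structure.

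Your fallback, however, is essentially the paper's argument, carried out in a different language. The paper first proves a dichotomy (Proposition~\ref{P1}): in a minimal structure whose generic type is definable (automatic for groups, Lemma~\ref{L11}), a generic Morley pair $(a,b)$ over $M$ either satisfies $\tp(a,b/M)=\tp(b,a/M)$ (the \emph{symmetric} case) or else one can manufacture an $M$-definable strict partial order with $M<a<b$. Thus ``unordered'' coincides exactly with ``symmetric'' (Remark~\ref{Remnon-sym}); there is no need to pass through stability or valuations. In the symmetric case the non-generic closure $\cl$ of Proposition~\ref{closure} satisfies exchange (Proposition~\ref{exchange}), so $\bar K$ carries a pregeometry in which any $\cl$-independent tuple is a generic Morley sequence. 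From here the paper runs a Wheeler/Macintyre--Pillay argument: for a hypothetical proper finite extension, a Vandermonde change of coordinates turns a generic Morley sequence $(t_0,\ldots,t_{n-1})$ into the symmetric functions $(c_0,\ldots,c_{n-1})$ of the conjugates $r_i=\sum_j t_j\alpha_i^j$; by exchange the $c_i$ are again generic Morley, so the generic polynomial $z^n+c_{n-1}z^{n-1}+\cdots+c_0$ acquires a root in $\bar K$, forcing the degree of $\alpha_i$ over $\bar K$ below $n$, a contradiction. This is a purely model-theoretic route --- no valuation theory, no Artin--Schreier --- and it handles all characteristics uniformly, so your separate appeal to \cite{W} in positive characteristic is unnecessary.
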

Thus Podewski's conjecture is reduced to the ordered case:
\begin{con}\label{C2}
There is no minimal ordered field of characteristic zero.
\end{con}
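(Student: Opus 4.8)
The plan is to argue by contradiction: suppose $K$ is a minimal ordered field of characteristic zero, with definable strict partial order $<$ not definable from equality, and derive an absurdity. By Reineke's classification \cite{R} the additive group $(K,+)$ is divisible and torsion-free, hence a $\mathbb{Q}$-vector space; moreover, since every subset of $K$ definable in the pure group is definable in the field, $(K,+)$ is itself minimal, so a definable subgroup is finite or cofinite. As a finite subgroup of a torsion-free group is trivial and a proper subgroup of infinite index (in particular of index $\geq 2$ in an infinite group) is co-infinite, $(K,+)$ has \emph{no proper nontrivial definable subgroup}. This rigidity of $(K,+)$ is the main lever: the order must not cut out any definable convex additive subgroup.

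First I would pin down the generic behaviour of $<$. Minimality gives a unique nonalgebraic type $p$ over $\emptyset$, namely the type containing every cofinite definable set. For $a\models p$ the three definable sets $\{x:x<a\}$, $\{x:x>a\}$ and $\{x:x\sim a\}$ partition $K\setminus\{a\}$, each is finite or cofinite, and, being pairwise disjoint, exactly one of them is cofinite, which one being determined by $p$. This splits the argument into three cases. The case in which incomparability dominates (generic $a$ has $\{x:x\sim a\}$ cofinite) is eliminated at once by the Lemma: an infinite chain meets the cofinite set of generic $a$, yet a chain element is comparable to infinitely many others, contradicting finiteness of its comparability set. The two surviving cases, $\{x:x>a\}$ cofinite or dually $\{x:x<a\}$ cofinite, are symmetric, so I would fix the first: for $b$ generic over $a$ one gets $a<b$, so the order is generically \emph{unbounded above}. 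Note that no quick contradiction arises from swapping $a$ and $b$, because a minimal ordered structure is unstable and there is no symmetry of generic independence, so $\tp(a,b)\neq\tp(b,a)$.

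Next I would study how $<$ transforms under the field operations. For each $c$, translation $x\mapsto x+c$ and dilation $x\mapsto cx$ carry $<$ to definable strict partial orders, and by minimality each must agree with $<$ up to a finite symmetric difference in every fibre. The aim is to convert this \emph{almost invariance} into genuine structure. If $<$ were exactly translation invariant, $(K,+,<)$ would be an ordered abelian group in the algebraic sense; divisibility would then make the interval $\{x:0<x<c\}$ both infinite (it contains $c/2,c/4,\dots$) and co-infinite (its complement contains $0,-c,-2c,\dots$) for $0<c$, contradicting minimality. Hence $<$ is genuinely non-invariant, and the plan is to exploit the simultaneous interaction with $+$ and $\cdot$ to show that the incomparability relation $\sim$ is, up to a finite set, transitive, thereby reducing to the almost linear case and invoking the theorem of this paper that minimal almost linear fields are algebraically closed. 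Concretely, from $<$ and a generic parameter one would try to define a coarsening that is a total preorder on a cofinite set, equivalently a definable equivalence relation refining $\sim$ with linearly ordered classes.

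The hard part, and the reason Conjecture \ref{C2} remains open, is exactly this reduction step. When $\sim$ is not transitive the order is essentially branching, of a tree-like rather than linear character, and minimality alone does not force the branching to collapse; the instability noted above removes the independence-theoretic tools one would normally use to regularize such a configuration, and since $\char K=0$ the additive-polynomial and Frobenius techniques that settle the positive characteristic case \cite{W} are unavailable. The crux is therefore to control a non-linear definable order whose restriction under both $x\mapsto x+c$ and $x\mapsto cx$ moves only by finite sets, and to show that these two compatibilities cannot coexist with genuine branching, producing the forbidden definable structure (a convex additive subgroup, or a nontrivial definable valuation, both excluded by the rigidity of $(K,+)$ above). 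This final extraction of forbidden structure from a branching order is the step I expect to be the main obstacle.
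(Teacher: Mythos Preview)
The statement you are addressing is labelled \emph{Conjecture} in the paper, and indeed the paper does \emph{not} prove it; it is presented precisely as the open residual case of Podewski's conjecture once Theorem~\ref{T2} has disposed of the unordered case. There is therefore no ``paper's own proof'' to compare your proposal against. You yourself acknowledge this in your final two paragraphs, where you write that ``the reason Conjecture~\ref{C2} remains open is exactly this reduction step'' and identify the extraction of forbidden structure from a branching order as ``the main obstacle''. So what you have written is not a proof but an outline of a strategy, with its gap clearly flagged.

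As a strategy, your plan is consonant with what the paper actually establishes: the paper proves (Theorem~\ref{T1}) that there is no almost linear minimal field, so if one could show that every minimal ordered field is almost linear, Conjecture~\ref{C2} would follow. Your proposed route is exactly this reduction, and the paper explicitly records it as open (Question~\ref{Question 2}). Two small corrections to your write-up: first, the paper does not say almost linear minimal fields are algebraically closed, it says they do not exist (the multiplicative group fails the structure theorem for almost linear minimal groups); second, your elimination of the ``incomparability dominates'' case tacitly uses that the generic type is definable (so that the set of $a$ with cofinite $\{x:x\sim a\}$ is itself definable), which is available here by Lemma~\ref{L11} since $K$ is a group, but should be stated.

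The genuine gap is the one you name: from almost-invariance of $<$ under translation and dilation you want to force $\sim$ to be transitive on a cofinite set, and nothing in your outline indicates how to do this. The paper offers no mechanism either; its methods (Lemmas~\ref{lincom} and \ref{lincom_fields}, Corollary~\ref{newcor}) give constraints on how generics relate to smaller elements, but these are used only after almost linearity is assumed, not to derive it. Until that reduction is supplied, the proposal remains a restatement of the open problem rather than a proof.
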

We study minimal ordered groups in Section \ref{Section 4} and show:
\begin{thm}\label{T3}
An almost linear minimal group $G$ is either elementary abelian of exponent $p$ or a finite sum of Pr\"ufer $p$-groups for a fixed prime $p$. In particular, it is a torsion group.
\end{thm}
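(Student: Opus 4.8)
The plan is to combine Reineke's classification of minimal pure groups with an analysis of how the definable order must interact with the group law. First, every subset of $G$ definable in the pure language of groups is definable in the full structure, hence finite or co-finite, so the group reduct of $G$ is a minimal pure group; by Reineke's classification \cite{R} it is abelian, and is either elementary abelian of some prime exponent $p$ --- in which case the first alternative of the statement already holds --- or divisible with only finitely many elements of each finite order. I would assume the latter from now on, so that as an abstract group $G\cong\mathbb{Q}^{(\kappa)}\oplus\bigoplus_{q}\mathbb{Z}(q^\infty)^{(r_q)}$ with every $r_q$ finite; it then remains to prove $\kappa=0$ and that $r_q\neq 0$ for exactly one prime $q$.

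Next I would analyse the order. Let $\prec$ be the given definable strict partial order, not definable from equality, with an infinite chain and transitive incomparability relation $\sim$. Since a chain meets each $\sim$-class at most once, there are infinitely many classes; each class is definable, so if one were co-finite there would be only finitely many of them, a contradiction, and hence \emph{every $\sim$-class is finite}. The quotient $L=(G/{\sim},<)$ is interpretable in $G$; it is a total order, and any definable subset of $L$ pulls back to a $\sim$-invariant and therefore finite or co-finite subset of $G$, so $L$ is a minimal total order and thus has order type $\omega+n$, $(\omega+n)^*$ or $\omega+\omega^*$. Consequently $G$ is countable, and since these order types are rigid with every element $\emptyset$-definable, \emph{every $\sim$-class of $G$ is a finite set definable over the parameters of $\prec$}.

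The heart of the proof is then to extract from $\prec$ the two structural facts that finish the argument: (a) that $G$ is a torsion group (equivalently $\kappa=0$, in view of the shape of $G$ above); and (b) that the definable subgroups of $G$ are linearly ordered by inclusion, which forces $G[q]\neq 0$ for at most one prime $q$. Granting (a) and (b), $G$ is divisible torsion $q$-primary with finitely many elements of each order, hence $G\cong\mathbb{Z}(p^\infty)^{n}$ for a single prime $p$ and some $n\geq 1$, which is in particular a torsion group, as claimed. The mechanism I would use to establish (a) and (b): after possibly reversing $\prec$, enumerate the $\sim$-classes in the initial $\omega$-segment of $L$ as finite definable sets $E_0<E_1<\cdots$, and study the finite definable sets obtained from sums and differences of the $E_j$; exploiting the divisibility of $G$ and the rigidity of $L$ one should see that these eventually stabilise to an exhausting increasing chain of finite subgroups (giving (a), since $\mathbb{Q}^{(\kappa)}$ has no non-trivial finite subgroup), and that the linearity of $L$ prevents two incomparable finite definable subgroups such as $G[q_1]$, $G[q_2]$ from coexisting (giving (b)).

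The main obstacle is exactly step (a)--(b): a priori $\prec$ has nothing to do with the group operation, and --- unlike in most model-theoretic arguments --- one cannot pass to a saturated elementary extension, since minimality is destroyed there (an elementary extension of $\omega+\omega^*$ need not be minimal). So the filtration by finite subgroups must be built working inside the countable group $G$ itself, using only the rigidity of $L$, the finiteness of the $\sim$-classes, and the divisibility of $G$. The two genuinely delicate points are showing that the relevant finite definable sets are closed under addition (this is what rules out a $\mathbb{Q}$-summand) and that the finite definable subgroups one obtains form a chain rather than merely a directed system (this is where almost-linearity is indispensable, and what rules out groups like $\mathbb{Z}(2^\infty)\oplus\mathbb{Z}(3^\infty)$).
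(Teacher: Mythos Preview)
Your outline is structurally reasonable --- Reineke reduces to the divisible case, the $\sim$-classes are finite, and the quotient is a minimal linear order --- but the crucial steps (a) and (b) are not proved, and the sketch you give for them lacks the key idea. Your assertion that one cannot pass to a saturated extension is precisely where you go astray: because the generic type of a minimal group is \emph{definable over $\emptyset$} (Lemma~\ref{L11}), it has a unique heir to any elementary extension, and the paper works throughout with generic elements in a monster model $\bar G$. This is the tool that makes the interaction between the order and the group law tractable; by renouncing it you are left, as you yourself say, with no mechanism for showing that sums and differences of the $E_j$ behave well.

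The missing ingredient is Lemma~\ref{lincom}: if $g$ is generic and $g_1,\ldots,g_k<g$, then no $\mathbb Z$-linear combination of the $g_i$ equals $g$. (Proof: otherwise the corresponding existential formula holds co-finitely in $G$, yet any finitely generated subgroup of $G$ is proper, and a $<$-minimal element outside it yields a contradiction.) From this and its refinement Lemma~\ref{lincom_gen} one deduces that $g\sim g'$ for every $g'\in\tfrac1p g$ whenever $g$ is generic and $G$ is not already a finite sum of Pr\"ufer $p$-groups (Corollary~\ref{newcor}). The theorem then falls out: if $G$ contained a copy $P$ of $\mathbb Q$ (or of a Pr\"ufer $q$-group with $G$ not $q$-primary), take $a_0\in P$ outside the finite exceptional set and choose $a_{k+1}\in\tfrac1p a_k$ in $P$; transitivity of $\sim$ traps all $a_k$ in a single finite $\sim$-class, forcing $a_i=a_j$ for some $i<j$, which contradicts the structure of $P$. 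Your hoped-for filtration by finite subgroups does exist --- the paper's Lemma~\ref{subgroup} shows $H_g=\{x:x\not>g\}$ is a subgroup for almost all $g$ --- but its proof again rests on Lemma~\ref{lincom}, not on any direct manipulation of the $E_j$.
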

This implies immediately:
\begin{thm}\label{T1} There is no almost linear minimal field.
\end{thm}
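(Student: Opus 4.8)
The plan is to read Theorem~\ref{T1} off Theorem~\ref{T3} by remembering that a field is in particular a group under addition. So suppose, for contradiction, that $F$ is an almost linear minimal field, with witnessing definable strict partial order $<$ (having an infinite chain and transitive incomparability). Viewing $F$ as a structure whose language contains the group operation $+$, it is then a minimal group which, witnessed by $<$, is almost linear; so Theorem~\ref{T3} applies and tells us that the additive group $(F,+)$ is elementary abelian of exponent $p$ or a finite sum of Pr\"ufer $p$-groups --- in either case a torsion group. (Equivalently one may first pass to the reduct $(F,+,<)$, which is still minimal since every $\{+,<\}$-formula is in particular definable in $F$ and hence defines a finite or cofinite subset; then apply Theorem~\ref{T3} to it.)

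Since the additive group of a field of characteristic zero is torsion-free, this forces $F$ to have characteristic $p>0$ (so in fact $(F,+)$ is elementary abelian of exponent $p$). Now invoke the positive-characteristic case of Podewski's conjecture \cite{W}: a minimal field of positive characteristic is algebraically closed, so $F$ is algebraically closed. But an algebraically closed field is stable, so it does not have the order property; in particular it cannot carry a definable partial order admitting an infinite chain, since such a chain $a_1<a_2<\cdots$ together with the formula $x\le y$ would exhibit the order property. This contradicts the almost linearity of $F$, and the theorem follows.

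For the reader who would rather not use stability at the last step, here is a variant of the endgame that leans on Theorem~\ref{T3} a second time. Once we know $F$ has characteristic $p$, apply Theorem~\ref{T3} to the multiplicative group: $F^\times=F\setminus\{0\}$ is cofinite in $F$, so it inherits a minimal structure, $<$ restricted to it still has an infinite chain (drop at most the term $0$), incomparability is still transitive, and $(F^\times,\cdot,<)$ is therefore an almost linear minimal group; hence $F^\times$ is torsion, i.e.\ every nonzero element of $F$ is a root of unity, so $F\subseteq\overline{\FF_p}$ is locally finite. A minimal locally finite field is, again by \cite{W}, equal to $\overline{\FF_p}$, and one concludes exactly as before.

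The argument is short, and I do not anticipate a serious obstacle; the only point deserving care is that "minimal group" in Theorem~\ref{T3} must be understood in the model-theoretic sense of a minimal structure whose language contains a group operation (so that it legitimately applies to $(F,+)$, possibly carrying the extra predicate $<$), rather than in the sense of a pure group. Granting that, Theorem~\ref{T1} is genuinely immediate: Theorem~\ref{T3} eliminates characteristic zero outright, and Wagner's theorem --- together with the stability of algebraically closed fields, or with a second use of Theorem~\ref{T3} --- eliminates positive characteristic.
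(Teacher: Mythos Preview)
Your main argument has a genuine gap at the stability step. In this paper a ``minimal field'' is a minimal structure whose language contains the field operations but may include further primitives---note the notation $(K,<,+,\cdot,0,1,\dots)$ in Lemma~\ref{lincom_fields} and $(G,<,+,0,\dots)$ throughout Section~\ref{Section 4}. Wagner's theorem yields only that the \emph{underlying pure field} is algebraically closed; the order $<$ lives in the full language, and stability of $\mathrm{ACF}_p$ says nothing about sets definable in a proper expansion. Whether $\overline{\FF_p}$ admits a minimal ordered expansion is precisely the kind of question the paper is addressing (compare Question~\ref{Question 2}), and you cannot dispose of it by citing stability of the reduct. Your argument is valid for \emph{pure} fields, but that is a weaker statement than Theorem~\ref{T1}.

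The paper's own proof avoids this issue and is a one-liner: apply Theorem~\ref{T3} to the multiplicative group $(F^\times,\cdot)$---which, as you correctly observe in your alternative, inherits an almost linear minimal group structure from $F$---and note that the multiplicative group of an infinite field is neither elementary abelian of prime exponent nor a finite sum of Pr\"ufer $q$-groups for a fixed prime $q$. (Since $x^q=1$ has at most $q$ solutions in a field, this already rules out all cases except $F^\times\cong\mathbb{Z}(q^\infty)$, and that one is excluded by an elementary argument on finite subfields.) No recourse to \cite{W} or to stability is needed, and the argument works uniformly for arbitrary expansions.

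Your alternative endgame comes within one step of this: you apply Theorem~\ref{T3} to $F^\times$, which is exactly the right move, but then retain only the weak corollary ``$F^\times$ is torsion'' and detour back through Wagner and stability, inheriting the same gap. Use the full conclusion of Theorem~\ref{T3} at that point and you have the paper's proof.
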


It is thus natural to ask:
\begin{que}\label{Question 1}
Is every minimal ordered group a torsion group?
\end{que}
Theorem \ref{T1} implies that a possible counterexample to
Conjecture \ref{C2} would have to be a field which is not almost
linear. It is hard to believe that such structures exist. In particular, all
known examples of minimal ordered structures are almost linear.
\begin{que}\label{Question 2}
Does there exist a minimal ordered structure [group] which is not almost linear?
\end{que}
Note that the analogue of Theorem \ref{T1} for quasi-minimal fields (uncountable fields whose definable subsets are countable or co-countable) is false: There is an almost linear quasi-minimal field \cite[Example 5.1]{PT}.

Finally, in Section \ref{Section 5} we classify almost
linear minimal groups, showing in particular that all cases in the conclusion of Theorem \ref{T3} can be realized (so the analogue of Theorem \ref{T1} for groups is false).

\section{Minimal structures with definable generic type}\label{Section 2}

Let $M$ be a minimal structure. The unique non-algebraic type $p \in S_1(M)$ will be called the {\em generic type} of $M$. In this section, we are interested in minimal structures
whose generic type is definable. This class is interesting because of the next lemma, noticed by A. Pillay.

\begin{lem}\label{L11}
The generic type $p$ of a minimal group $G$ is its unique generic type in the sense of left [and right] translates, i.e.\ a formula $\phi(x)$ (with parameters from $M$) belongs to $p$ if and only if finitely many left [right] translates of $\phi(G)$ cover $G$. In fact, $\phi(x) \in p$ if and only if two left [right] translates of $\phi(G)$ cover $G$. This characterization of $p$ implies that $p$ is definable over $\emptyset$.
\end{lem}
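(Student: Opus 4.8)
The plan is to prove that, for a formula $\phi(x)$ with parameters in $G$, the three conditions ``$\phi(x)\in p$'', ``finitely many left translates of $\phi(G)$ cover $G$'' and ``two left translates of $\phi(G)$ cover $G$'' are pairwise equivalent, and then to extract $\emptyset$-definability of $p$ from the last one. The only background I need is the defining property of minimality: a definable subset of $G$ is finite or cofinite, and the generic type $p$ is exactly the set of formulas $\phi(x)$ for which $\phi(G)$ is cofinite.

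First I would deal with the two routine implications. If $g_1\phi(G)\cup\cdots\cup g_n\phi(G)=G$ with the $g_i\in G$, then since $G$ is infinite at least one $g_i\phi(G)$ is infinite, hence $\phi(G)$ is infinite, hence cofinite by minimality, so $\phi(x)\in p$. And covering by two translates is of course a special case of covering by finitely many, so the content of the lemma reduces to the single implication: if $\phi(x)\in p$, then already two left translates of $\phi(G)$ cover $G$.

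For this last point, set $F:=G\setminus\phi(G)=\{a_1,\dots,a_n\}$, which is finite. For any $g\in G$ the complement of the translate $g\,\phi(G)$ is $gF$, so $g\,\phi(G)\cup\phi(G)=G$ holds as soon as $gF\cap F=\emptyset$, that is, as soon as $g\notin\{a_ja_i^{-1}: 1\le i,j\le n\}$. This forbidden set is finite while $G$ is infinite, so such a $g$ exists, and then $g\,\phi(G)$ together with $\phi(G)$ (the translate by the identity) cover $G$. The argument for right translates is the same, forbidding instead the finite set $\{a_i^{-1}a_j: 1\le i,j\le n\}$.

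It remains to read off definability. Fix a formula $\psi(x,\bar y)$ over $\emptyset$; for a parameter tuple $\bar b$ in $G$ and $g\in G$, a point $z$ lies in the left translate $g\,\psi(G,\bar b)$ iff $\psi(g^{-1}z,\bar b)$ holds, so the equivalence just proved becomes
\[
\psi(x,\bar b)\in p\iff G\models \exists g\,\exists h\,\forall z\,\bigl(\psi(g^{-1}z,\bar b)\vee\psi(h^{-1}z,\bar b)\bigr),
\]
and the right-hand side is a formula $d_\psi(\bar y)$ in the language of groups with no parameters. The family $\{d_\psi\}_\psi$ is then a defining scheme for $p$ over $\emptyset$. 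I do not expect a genuine obstacle here: the one slightly delicate point is the finiteness of the obstruction set $\{a_ja_i^{-1}\}$, which is precisely what upgrades ``finitely many'' to ``two'' translates; everything else is bookkeeping, the only thing to keep an eye on being that the scheme $d_\psi$ really involves no parameters — it uses only the group multiplication, which is in the language.
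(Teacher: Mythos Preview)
Your proof is correct and follows essentially the same approach as the paper: both reduce to finding a single translate $g\phi(G)$ that absorbs the finite complement $F=G\setminus\phi(G)$; the paper phrases this as picking $g$ in the nonempty intersection $\bigcap_i g_i\phi(G)^{-1}$, while you equivalently compute the forbidden set $\{a_ja_i^{-1}\}$ directly. Your explicit $\emptyset$-defining scheme is a welcome addition that the paper leaves implicit.
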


\begin{proof}
If finitely many translates of $\phi(G)$ cover $G$, then $\phi(G)$ is infinite, hence $\phi(x) \in p$. For the converse, suppose $\phi(x) \in p$. Then $G \setminus \phi(G)$ is a finite set $\{ g_1,\dots,g_n\}$. Since the sets $g_1\phi(G)^{-1}, \dots, g_n \phi(G)^{-1}$ are co-finite, their intersection is non-empty, so it contains an element $g$. It is clear now that $G =\phi(G) \cup g\phi(G)$.
\end{proof}

For the remainder of this section $M$ will be a minimal structure
whose generic type $p\in S_1(M)$ is definable over $\emptyset$. Let
$\bar{M}\succ M$ be a monster model. Then $p$ has a unique global heir $\bar p \in S_1(\bar{M})$, which is defined by the same defining scheme as $p$. For $C \subseteq \bar{M}$ a {\em generic Morley sequence} is a sequence $(a_i : i \in \kappa)$ such that $a_i \models\bar p|Ca_{<i}$ for all $i \in \kappa$.
Each generic Morley sequence over $C$ is indiscernible over $C$,
and the type over $C$ of such a sequence of a fixed length
$\kappa$ does not depend on its choice. Recall that for a formula
$\varphi(x;\overline{y})$ a $\varphi(x;\overline{y})$-definition of $p$ is denoted by $d_p x \varphi(x,\overline{y})$.

\begin{prop}\label{closure} For any $A\subseteq\bar M$ put
$$\cl(A)=\{x \in \bar M:\tp(x/A)\text{ is non-generic}\}.$$
Then $\cl$ is a closure operator on $\bar M$. In particular, it is idempotent.\end{prop}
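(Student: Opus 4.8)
The claim is that $\cl$ is a closure operator, so I need to verify three things: $A \subseteq \cl(A)$; monotonicity ($A \subseteq B \Rightarrow \cl(A) \subseteq \cl(B)$); and idempotence ($\cl(\cl(A)) = \cl(A)$). The first is immediate, since every element of $A$ is algebraic over $A$ and the generic type is the unique non-algebraic type, so $\tp(x/A)$ is non-generic for $x \in A$. Monotonicity is equally quick: if $\tp(x/A)$ is non-generic, i.e.\ $x$ satisfies some non-generic formula over $A$, that same formula is over $B \supseteq A$, so $\tp(x/B)$ is non-generic. The content is entirely in idempotence, and since $\cl(A) \subseteq \cl(\cl(A))$ follows from monotonicity and $A \subseteq \cl(A)$, the real work is showing $\cl(\cl(A)) \subseteq \cl(A)$: if $\tp(x/\cl(A))$ is non-generic then $\tp(x/A)$ is non-generic.

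The plan is to argue by contraposition: assume $\tp(x/A)$ is generic, i.e.\ $x \models \bar p|A$, and show $\tp(x/\cl(A))$ is generic, i.e.\ $x \models \bar p|\cl(A)$. The key tool is the generic Morley sequence together with the fact, recalled just before the proposition, that $\bar p$ has a single global heir with a fixed defining scheme, so that genericity over a set is detected by the same formula-definitions $d_p x\,\varphi(x,\bar y)$ regardless of the base. Concretely, take a long generic Morley sequence $(a_i : i < \kappa)$ over $A$ with $\kappa$ large (bigger than $|\cl(A)|$, or than the relevant cardinality); by indiscernibility over $A$ and the uniqueness of heir, I want to locate some $a_i$ realizing $\bar p|\cl(A)$ — more precisely, I want to show that for each element $c \in \cl(A)$, only boundedly (in fact finitely, by minimality applied inside the monster via the defining scheme) many $a_i$ can fail to be "generic over $c$ relative to $A$", and hence some tail of the sequence realizes $\bar p|\cl(A) a_{<i}$. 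Then $x$ and such an $a_i$ have the same type over $A$ (both realize $\bar p|A$), and I transport: since the sequence is indiscernible and the later terms realize $\bar p$ over $\cl(A)$, an automorphism over $A$ moving $a_i$ to $x$ shows $x \models \bar p|\cl(A)$ as well — or, cleaner, I directly check that $x \models \bar p|\cl(A)$ by unwinding the definition of $\cl(A)$: each $c \in \cl(A)$ lies in $\cl(A)$ precisely because $\tp(c/A)$ is non-generic, and one shows this forces $\tp(x/Ac)$ to remain generic using that $x \models \bar p|A$ and the heir/coheir symmetry (non-forking / the exchange-type behaviour of generic extensions). Doing this for all $c \in \cl(A)$ simultaneously — i.e.\ for finite tuples $\bar c$ from $\cl(A)$ — gives $x \models \bar p|\cl(A)$.

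The step I expect to be the main obstacle is the simultaneous control over all parameters in $\cl(A)$ at once: genericity over each individual $c$ is comparatively easy, but one must ensure that $\tp(x/A\bar c)$ is generic for every finite $\bar c \subseteq \cl(A)$, and this requires knowing that the non-genericity witnessed for the coordinates of $\bar c$ does not "conspire" to make $x$ non-generic over the tuple. The right framework is to show that $\tp(x/A)$ generic implies $x \models \bar p | A\bar c$ whenever each coordinate of $\bar c$ is non-generic over $A$; the cleanest route is via the generic Morley sequence, using that such a sequence over $A$ is also a generic Morley sequence over $A\bar c$ once we pass to a tail (since each $a_i$ stays generic over $A\bar c a_{<i}$ — this is where finiteness of $\bar c$ and minimality, transported to $\bar M$ through the $\emptyset$-definable defining scheme of $\bar p$, enter), and then matching $x$ to a tail element by an $A$-automorphism. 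I would carry out the argument in exactly that order: (1) note $A \subseteq \cl(A)$; (2) note monotonicity; (3) reduce idempotence to $\cl(\cl(A)) \subseteq \cl(A)$; (4) prove the key lemma that $x \models \bar p|A$ and $\bar c$ non-generic over $A$ (coordinatewise) imply $x \models \bar p|A\bar c$, via a generic Morley sequence and heir-uniqueness; (5) conclude.
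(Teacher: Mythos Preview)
Your reduction to the key lemma (``$x\models\bar p|A$ and each $c_i$ non-generic over $A$ imply $x\models\bar p|A\bar c$'') is correct, but the Morley-sequence route does not close. The automorphism step fails outright: an $A$-automorphism $\sigma$ taking a tail element $a_i$ to $x$ need not fix $\bar c$, so from $a_i\models\bar p|A\bar c$ you only obtain $x\models\bar p|A\sigma(\bar c)$, not $x\models\bar p|A\bar c$. Worse, the preceding claim that a Morley sequence over $A$ is eventually a Morley sequence over $A\bar c$ is circular: showing $a_i\models\bar p|A\bar c\,a_{<i}$ is precisely the key lemma applied over the base $Aa_{<i}$ (over which $\bar c$ is still coordinatewise non-generic). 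Once you can do it for $a_0$ you have already done it for $x$, and the sequence is redundant.

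The paper avoids Morley sequences and makes your parenthetical ``minimality transported through the defining scheme'' do all the work directly. If $a\in\cl(\cl(A))\setminus\cl(A)$, take a non-generic $\phi(x,\bar b,\bar a)$ with $\bar a$ a finite tuple from $A$ and $\bar b\in\cl(A)$, together with non-generic $\phi_i(y_i,\bar a)$ witnessing $b_i\in\cl(A)$. Since $a\models\bar p|A$, the formula
\[
\exists\bar y\ \Bigl[\phi(x,\bar y,\bar a)\land\neg d_pz\,\phi(z,\bar y,\bar a)\land\textstyle\bigwedge_i\bigl(\phi_i(y_i,\bar a)\land\neg d_pz\,\phi_i(z,\bar a)\bigr)\Bigr]
\]
lies in $\bar p$, so $\bar a$ satisfies its $d_px$-version; by $M\prec\bar M$ some tuple $\bar a_0\in M$ does too. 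But in $M$ ``non-generic'' means ``finite'', so over $\bar a_0$ there are only finitely many admissible $\bar y$, and for each the set $\phi(M,\bar y,\bar a_0)$ is finite---hence the displayed formula defines a finite subset of $M$ and cannot lie in $p$. The whole argument is that $\emptyset$-definability of $p$ lets you encode non-genericity first-order and then pull the witnesses down to $M$, where minimality actually bites; this is the step your proposal gestures at but never carries out.
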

\begin{proof} Clearly $\cl$ has finite character, and  $A\subseteq B$  implies $A\subseteq\cl(A)\subseteq \cl(B)$. Suppose $a\in\cl(\cl(A))$. Then there is a non-generic definable set $\phi(\bar M,\bar b,A)$ containing $a$,
and for every $b_i\in\bar b$ a non-generic definable set
$\phi_i(\bar M,A)$ containing $b_i$. If $a$ were generic over $A$, then
$A$ would satisfy
$$d_px\exists\bar y\,\{\phi(x,\bar y,X)\land\neg d_pz\phi(z,\bar y,X)\land\bigwedge_i[\phi_i(y_i,X)\land\neg d_pz\phi_i(z,X)]\}.$$
Hence, there is $A_0\subseteq M$ satisfying this formula. But in $M$ a non-generic formula defines a finite set, so the formula
$$\exists\bar y\,\{\phi(x,\bar y,A_0)\land\neg d_pz\phi(z,\bar y,A_0)\land\bigwedge_i[\phi_i(y_i,A_0)\land\neg d_pz\phi_i(z,A_0)]\},$$
defines a finite set, which cannot be generic, a contradiction. Thus $\cl(\cl(A))=\cl(A)$.\end{proof}

We shall now prove a version of the dichotomy theorem for minimal
structures from \cite{T2} in our context. The proof uses arguments
from \cite{PT}, where a similar result was proved for locally
strongly regular types.

\begin{prop}\label{P1} Let $(a,b)$ be a generic Morley sequence over $M$. Then exactly one of the following two cases holds.
\begin{itemize}
\item{\bf Symmetric:} $\tp(a,b/M)=\tp(b,a/M)$.\\
In this case, a generic Morley sequence over any $C\subseteq \bar{M}$ containing $A$ is totally indiscernible over $C$ (i.e.\ indiscernible over $C$ as a set).
\item{\bf Asymmetric:} $\tp(a,b/M)\neq\tp(b,a/M)$.\\
In this case, there is an $M$-definable strict partial order on $M$ such that
$M<a<b$. Moreover, $(M,\leq)$ is a directed, well partial order
having infinite increasing chains and no such chain of order type
$\omega+1$.\end{itemize}\end{prop}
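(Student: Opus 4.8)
The plan is to analyze the two-element generic Morley sequence $(a,b)$ over $M$ and use a symmetry/asymmetry dichotomy coming directly from the formula $\operatorname{tp}(a,b/M)$ versus $\operatorname{tp}(b,a/M)$. First I would observe that, since the heir $\bar p$ is definable over $\emptyset$ and $(a,b)$ is indiscernible over $M$, the condition $\operatorname{tp}(a,b/M) = \operatorname{tp}(b,a/M)$ is absolute: it does not depend on the choice of the generic Morley sequence, and more importantly it propagates to any longer generic Morley sequence over any parameter set, because such sequences are indiscernible and their type over the base is determined by the defining scheme of $\bar p$. In the symmetric case I would show by induction on the length, using exchangeability of adjacent pairs plus indiscernibility, that a generic Morley sequence over $C \supseteq A$ is totally indiscernible: any transposition of two entries can be realized by finitely many adjacent transpositions, and each adjacent transposition preserves the type by the $2$-element symmetry together with the fact that the tail is still a generic Morley sequence over the (now reordered) initial segment.

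In the asymmetric case, the key move is to \emph{define} the order from the asymmetry of $\bar p^{(2)}$. Concretely, pick a formula $\varphi(x,y)$ over $M$ witnessing $\varphi(a,b) \wedge \neg\varphi(b,a)$, i.e.\ $\varphi(x,y) \in \operatorname{tp}(a,b/M)$ but $\varphi(x,y) \notin \operatorname{tp}(b,a/M)$. Using the definition scheme $d_p$, I would set $x < y$ to be (an $M$-definable refinement of) $d_p z\,\varphi(x,z) \wedge \varphi(x,y) \wedge \neg\varphi(y,x)$, or more robustly work with the relation ``$\varphi(x,y)$ holds for the generic extension,'' and then check transitivity by a chain of three generic elements $a<b<c$: genericity of $c$ over $Mab$, of $b$ over $Ma$, plus the defining scheme give $\varphi(a,c)$, and antisymmetry/irreflexivity follow because $\varphi(x,x)$ would be in the generic type of a single element, forcing $\operatorname{tp}(a,b/M)=\operatorname{tp}(b,a/M)$. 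One also has to arrange $M < a$: every $m \in M$ is non-generic, hence $\operatorname{tp}(m,a/M)$ is determined and $m < a$ should hold for the \emph{realized} $m$ — this needs a short argument that the partial order, restricted to $M$, puts every element of $M$ below a generic, using that $\operatorname{cl}$ (Proposition \ref{closure}) is a genuine closure operator so ``generic over $M$'' is stable under naming finitely many parameters from $M$.

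For the structural properties of $(M,\le)$: directedness follows because given $m_1, m_2 \in M$, a generic $a$ over $M$ satisfies $m_1 < a$ and $m_2 < a$, and then by minimality (the set of common upper bounds is co-finite in $M$, being the complement of a non-generic set) there is such an upper bound already in $M$. Well-partial-orderedness (no infinite antichain, no infinite descending chain) should come from minimality: an infinite descending chain inside $M$ would, after an automorphism argument, contradict the fact that the generic type is unique, or more directly one uses that each $\{y : y < m\}$ is a non-generic, hence finite, definable set — so there are no infinite descending chains; infinite antichains are ruled out similarly since being incomparable to a fixed $m$ is co-finite but one cannot have all of $M$ pairwise incomparable (that would make $<$ trivial, contradicting asymmetry). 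The existence of infinite increasing chains comes from the infinite generic Morley sequence $a_0 < a_1 < a_2 < \cdots$, which lives in $\bar M$ but can be pulled down: any infinite strictly increasing sequence of $M$-definable ``initial segments'' shows the order on $M$ is unbounded, and minimality upgrades this to an actual infinite increasing chain in $M$ (as in Lemma 1's argument). Finally, the absence of a chain of type $\omega+1$ is exactly the statement that no single element of $M$ lies above an infinite increasing chain: such an element $m$ would have $\{y : y < m\}$ infinite, hence co-finite, hence generic, contradicting that $m$ is non-generic over $M$ — equivalently contradicting irreflexivity since a generic element below $m$ would, combined with $m < $ (generic above), violate the asymmetry set-up.

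The main obstacle I anticipate is the passage from the ``external'' asymmetry of the pair type in $\bar M$ to an honest \emph{$M$-definable} strict partial order on $M$ itself that simultaneously satisfies $M < a < b$: one must choose the defining formula carefully so that transitivity holds on the nose (not just generically) and so that the induced relation on $M$ is both irreflexive and has every point of $M$ strictly below every generic. This is where the argument from \cite{PT} for locally strongly regular types is adapted, and the bookkeeping of ``which genericity over which parameter set is used where'' in the transitivity computation is the delicate point; everything else (directedness, well-foundedness, the chain conditions) then follows fairly mechanically from minimality and the finite-character closure operator of Proposition \ref{closure}.
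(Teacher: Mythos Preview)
Your symmetric case is essentially the paper's argument: definability of $p$ lets you express the $2$-symmetry as a formula scheme $d_px\,d_py(\phi(x,y,\bar m)\leftrightarrow\phi(y,x,\bar m))$ which then transfers to any $C$, and adjacent transpositions plus induction give total indiscernibility.

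In the asymmetric case you have the right outline but miss the two key technical moves, and without them the argument does not go through. The first is the \emph{heir versus coheir} step: the paper does not start from an arbitrary $\varphi$ with $\varphi(a,b)\wedge\neg\varphi(b,a)$, but uses that $p$ has a unique heir and a unique coheir over $Ma$, and these are distinct precisely in the asymmetric case. Hence $\tp(b/Ma)$ (the heir) is \emph{not} a coheir, so there is $\phi(a,y)\in\tp(b/Ma)$ satisfied by no element of $M$; this is what gives the crucial property $\phi(a,y)\vdash p(y)$. Dually, $\tp(a/Mb)$ \emph{is} a coheir, so $\phi(M,b)$ is infinite, hence (after a finite correction) $\phi(M,b)=M$. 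Your proposed $\varphi$ has no reason to satisfy either of these.

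The second move is the definition of the order. Your candidate $d_pz\,\varphi(x,z)\wedge\varphi(x,y)\wedge\neg\varphi(y,x)$ is not transitive on the nose, and you correctly flag this as the main obstacle---but you do not resolve it. The paper's trick is to set
\[
x<y \quad\Longleftrightarrow\quad \phi(x,y)\ \wedge\ \forall t\,(\phi(y,t)\rightarrow\phi(x,t)),
\]
which is transitive by construction (the $\forall t$ clause composes). With $\phi$ chosen as above, one then checks $M<b$ directly: $\phi(c,b)$ holds for all $c\in M$, and if $\phi(b,d)$ then $d\models p$, so $\phi(c,d)$ holds as well. From $M<b$ one gets $a<b$ because $\tp(a/Mb)$ is finitely satisfiable in $M$, and $M<a$ because $\tp(a/M)=\tp(b/M)$. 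Your argument for $M<a$ (``$m$ is non-generic, hence $m<a$'') skips exactly this content.

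Finally, the infinite increasing chain in $M$ is not obtained by pulling down the Morley sequence; the paper iterates the coheir property of $\tp(a/Mb)$: from $c_1<a<b$ with $c_1\in M$ one gets $c_1<c_2<a<b$ with $c_2\in M$, and so on. Your well-foundedness and no-$\omega{+}1$-chain arguments are fine once the order is correctly defined, since $\{y: y<c\}\subseteq\phi(M,c)$ is finite for $c\in M$ by the property $\phi(a,y)\vdash p(y)$.
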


\begin{proof}  Suppose that $\tp(a,b/M)=\tp(b,a/M)$. This implies that for any
$\phi(x,y,\overline{z})$ without parameters and for any $\overline{m}\in M$ we have
$$\models d_px\,d_py(\phi(x,y,\overline{m})\leftrightarrow\phi(y,x,\overline{m})).$$
Then the same formula is satisfied by any $\overline{c}\in C$ in place of $\overline{m}$, and so for every generic Morley sequence $(a',b')$ over $C$ we have $\tp(a',b'/C)=\tp(b',a'/C)$.  By induction, it follows easily that generic Morley sequences of any length are totally indiscernible.

Now, suppose $\tp(a,b/M)\neq\tp(b,a/M)$. We will find
$\phi(x,y)\in\tp(a,b/M)$ such that:
\begin{enumerate}
\item $\models \forall x,y( \phi(x,y)\rightarrow \neg
\phi(y,x))$ \ \ ($\phi(x,y)$ is asymmetric);
\item $\phi(M,b)=M$ \ (i.e. $\phi(c,y)\in p(y)$ for all $c\in
M$);
\item $\phi(a,y)\vdash p(y)$.
\end{enumerate}
Since $p$ is definable, it has a unique heir
and a unique coheir in $S_1(Ma)$; since $\tp(a,b/M)\neq\tp(b,a/M)$,
the two must be distinct. Therefore $\tp(b/Ma)$ is not a coheir, so
there is $\phi(a,y)\in\tp(b/Ma)$ which is satisfied by no element
of $M$. Since $\tp(a/Mb)$ is a coheir, $\phi(M,b)$ is infinite.
Definability of $p$ implies that $\phi(M,b)$ is co-finite;
modifying it slightly on $M$, we may assume $\phi(M,b)=M$.
Moreover, $\phi(b,x)\notin\tp(a/Mb)$ as $\tp(a/Mb)$ is a coheir,
so $(a,b)$ satisfies $\phi'(x,y)=\phi(x,y)\land\neg\phi(y,x)$.
Clearly $\phi'$ satisfies conditions (1)--(3).

We leave to the reader to verify  that  the formula
$$\phi(x,y)\wedge \forall t(\phi(y,t)\rightarrow \phi(x,t))$$
defines a strict partial order; denote it by $x<y$. Now, we prove
that for any $c\in M$ we have $c<b$, i.e.
$$\models \phi(c,b)\wedge \forall
t(\phi(b,t)\rightarrow \phi(c,t)).$$ Condition (2) implies
$\models \phi(c,b)$. Let $d$ be such that $\models\phi(b,d)$.
Then, by (3), $d$ realizes $p$, so $\phi(c,y)\in p(y)$ implies
$\models\phi(c,d)$. Thus, $c<b$. Since this holds for all $c\in M$
and $\tp(a/bM)$ is finitely satisfiable in $M$, we conclude that
$a<b$. As $\tp(a/M)=\tp(b/M)$ and $M<b$, we also get $M<a$.

Now, the formula $x<b$ belonging to $\tp(a/bM)$ is satisfied in $M$, say by $c_1\in M$. Then
$c_1<b$; in fact, since $M<a$, we also have $c_1<a$. Hence,
the formula $c_1<x<b$ belongs to $\tp(a/bM)$, and so there is $c_2\in M$ such that
$c_1<c_2<b$; then also $c_2<a$, so the formula $c_1<c_2<x<b$ belongs to $\tp(a/bM)$. Continuing in this way, we get an infinite increasing chain in $M$.  We leave to
the reader to verify that $\leq$ is a directed, well partial order with no chains of order type $\omega+1$.
\end{proof}

The above proposition leads to the following definition.
\begin{dfn}\label{Def_symmetric}
$M$ is {\em symmetric} if $\tp(a,b/M)=\tp(b,a/M)$ for each/some generic Morley sequence $(a,b)$ over $M$; equivalently, if generic Morley sequences (of arbitrary length) over any set $C$
are totally indiscernible over $C$. Otherwise $M$ is {\em asymmetric}.
\end{dfn}

From now on, whenever a definable partial order $<$ is clear from the context, $x \sim y$ will be defined as $\neg(x<y\vee y<x)$.
\begin{dfn}\label{D1}  A definable partial order $<$ on $M$ with infinite chains is {\em almost linear} if $\sim$ is an equivalence relation on $p(\bar M)$. We call $M$ {\em almost linear} if such an order exists.
\end{dfn}

\begin{rmk}\label{Rem1} Suppose that $<$ is a definable partial order with infinite chains on some minimal structure. Then the following conditions are equivalent.\begin{enumerate}
\item $<$ is almost linear.
\item After a modification of $<$ on a finite set, incomparability $\sim$ becomes an equivalence relation (we allow here modifications of $<$ between elements of this finite set and all other elements, but in such a
way that the resulting order is definable).
\item After a modification of $<$ on a finite set, incomparability $\sim$ becomes an equivalence relation, and the set of equivalence classes has order type $(\omega,<)$, $(\omega^*,<)$ or $(\omega + \omega^*,<)$.\end{enumerate}
\end{rmk}
\begin{proof}
The implications $(3) \rightarrow (2)  \rightarrow (1)$ are
clear. In order to see that $(1) \rightarrow (2)$, one should use
compactness and the minimality of $M$. Finally, for $(2) \rightarrow (3)$ it suffices to notice that the order type of a minimal linear order is either one of the types listed in $(iii)$, or
$(\omega+n,<)$ or $((\omega+n)^*,<)$.\end{proof}

\begin{rmk}\label{Remadditional}
Assume that $M$ is ordered by $<$ with an infinite increasing chain. Then  $C:=\{c\in M\,|\,c<x\in p(x)\}$ is a definable, co-finite subset of $M$, and hence $<$ is a well partial order and $M$ is asymmetric. After modifying $<$ so that the elements of $M\setminus C$ are below all the others, we get that $C=M$, and then $<$ is a directed, well partial order having infinite
increasing chains and no such chain of order type $\omega +1$. If
in addition $<$ is almost linear, then after modifying $<$ on a  finite set we get that $M/{\sim}$ is ordered of order type $\omega$.
\end{rmk}

Of course, analogous observations are true when $M$ contains an infinite decreasing chain. Proposition \ref{P1} together with Remark \ref{Remadditional} yield the following observation.

\begin{rmk}\label{Remnon-sym}
Let $M$ be a minimal structure whose generic type is definable.
Then $M$ is asymmetric iff $M$ is ordered.\end{rmk}

We finish this section with an  example of an ordered minimal
structure which is due to Grzegorz Jagiella. Although the
structure is almost linear, it has a definable order with infinite
chains which is not almost linear.

\begin{exm}\label{EJagiella}
Let $M=\omega\times \{l,r\}$. Define an order $<$ on $M$ by
putting the natural orders on $\omega\times\{l\}$ and on
$\omega\times\{r\}$, together with
$$(x,l)<(y,r) \iff x+2 \leq y \;\;\;\; \mbox{and} \;\;\;\; (y,r)<(x,l) \iff y+2 \leq x$$
for all natural numbers $x$ and $y$.

We leave to the reader to verify that $(M,<)$ is minimal. For
$n\in\omega$ define $a_{2n}=(2n,l)$ and $a_{2n+1}=(2n+1,r)$. Then
$a_{n}$ and $a_{n+1}$ are incomparable for all $n$, and so $<$ is
not almost linear.

Now, we show that $M$ interprets $(\omega,<)$. First, note that
`$y$ is maximal incomparable to $x$' is a definable function
$f(x)=y$. Then for  $x,y\in M$ define:
$$x<'y \ \ \ \mbox{if and only if} \ \ \ x<y \ \ \mbox{or} \ \   y=f(x).$$
Then $<'$-incomparability  is an equivalence relation with 2-element
classes $\{(n,r),(n,l)\}$, and, after factoring it out, we end up
with $(\omega,<)$. By Remark \ref{Rem1}, $<'$ is almost linear in
$M$, so $M$ is almost linear.
\end{exm}

\section{The symmetric case}\label{Section 3}

In this section, we shall prove that symmetric minimal fields are
algebraically closed. The following is a version of the
corresponding result for locally strongly regular types from
\cite{PT}. It is adapted to the context of minimal structures.

\begin{prop}\label{exchange} Let $M$ be a symmetric minimal structure with definable (over $\emptyset$) generic type, $\bar M$ its monster model and $\cl$ the closure operator from Proposition \ref{closure}. Then $(\bar M, \cl)$ is an infinite dimensional  pre-geometry, and $a_1,\dots, a_n$ is $\cl$-independent over $A$ if and only if it is a generic Morley sequence over $A$. In particular, if $(a_1,\ldots,a_n)$ is a generic Morley sequence over $A$ and $a_1,\ldots,a_n\in\cl(A,b_1,\ldots,b_n)$, then $(b_1,\ldots,b_n)$ is also a generic Morley sequence over $A$.\end{prop}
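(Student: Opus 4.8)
The plan is to show that the closure operator $\cl$ satisfies the exchange property, since finite character and idempotence (hence monotonicity) are already in place by Proposition~\ref{closure}. So I would fix a set $A$, elements $a,b$, and suppose $a\in\cl(Ab)\setminus\cl(A)$; the goal is $b\in\cl(Aa)$. Translating into type language: $\tp(a/A)$ is generic, $\tp(a/Ab)$ is non-generic, and I want $\tp(b/Aa)$ to be non-generic. The key tool is the symmetry hypothesis. Consider a generic Morley sequence $(a',b')$ over $A$; by Definition~\ref{Def_symmetric} it is totally indiscernible over $A$, so $\tp(a',b'/A)=\tp(b',a'/A)$. The point is that "$a\in\cl(Ab)$" is witnessed by a non-generic formula $\phi(x,b)\in\tp(a/Ab)$, i.e. a formula over $A$ with $\neg d_p x\,\phi(x,b)$ and $\models\phi(a,b)$; I want to turn this into a non-generic formula over $A$ satisfied by $b$ with parameter $a$.

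First I would argue that if $b$ were generic over $Aa$, then (since $a$ is generic over $A$ and, crucially, by symmetry the pair can be realized in either order) $(a,b)$ would be a generic Morley sequence over $A$; but then $(b,a)$ is also one, so $\tp(a/Ab)$ would be generic, contradicting $a\in\cl(Ab)$. The subtle point is seeing that $\tp(a,b/A)$ is actually the type of a generic Morley sequence: $a$ realizes $\bar p|A$, and I need $b$ to realize $\bar p|Aa$; this is exactly the negation of $b\in\cl(Aa)$ together with the observation that over a minimal structure there is only \emph{one} non-algebraic $1$-type over any given set extending the generic type — so if $\tp(b/Aa)$ is generic it must equal $\bar p|Aa$ (here one uses that $p$ is definable, so $\bar p|Aa$ is the unique heir, but I should check that "generic over $Aa$" forces agreement with the heir; this follows because $\cl$-genericity of $b$ over $Aa$ means $\tp(b/Aa)$ is non-algebraic, and the restriction of any non-algebraic type to the model $M$ is $p$, and by definability $\bar p$ is the unique global type restricting to $p$ — here I may need to pass to a copy of $M$ inside $\bar M$ containing $A$, or argue via indiscernibles). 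This circle of ideas is the standard way the symmetric dichotomy yields exchange, following \cite{PT}.

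Once exchange is established, $(\bar M,\cl)$ is a pregeometry, and it is infinite-dimensional because $M$ is infinite and any infinite generic Morley sequence is $\cl$-independent. The identification of $\cl$-independent tuples over $A$ with generic Morley sequences over $A$ is then a routine induction: $(a_1,\dots,a_n)$ is $\cl$-independent over $A$ iff $a_{i}\notin\cl(Aa_1\cdots a_{i-1})$ for each $i$, iff $\tp(a_i/Aa_1\cdots a_{i-1})$ is non-generic fails, i.e. $a_i\models\bar p|Aa_{<i}$ for each $i$ — using at each stage that a non-algebraic type over a set restricts to the generic type and, by definability plus symmetry (total indiscernibility), the type of the whole sequence is determined. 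Finally, the "in particular" clause is immediate from exchange applied coordinate by coordinate: if $(a_1,\dots,a_n)$ is a generic Morley sequence over $A$ and each $a_i\in\cl(Ab_1\cdots b_n)$, then $\dim(b_1,\dots,b_n/A)\ge\dim(a_1,\dots,a_n/A)=n$ by monotonicity of dimension under $\cl$, forcing $(b_1,\dots,b_n)$ to be $\cl$-independent over $A$, hence a generic Morley sequence over $A$ by the equivalence just proved.

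I expect the main obstacle to be the first paragraph's delicate step: rigorously converting the non-generic formula $\phi(x,b)$ witnessing $a\in\cl(Ab)$ into a non-generic formula witnessing $b\in\cl(Aa)$, which is where symmetry must be used in an essential (not merely cosmetic) way. The cleanest route is probably not to manipulate formulas directly but to run the contrapositive through generic Morley sequences as sketched — assume $b\notin\cl(Aa)$ and $a\notin\cl(A)$, deduce $(a,b)$ is a generic Morley sequence over $A$, apply total indiscernibility to flip it to $(b,a)$, and read off $a\notin\cl(Ab)$ — taking care with the uniqueness-of-non-generic-type bookkeeping over arbitrary parameter sets $A$ rather than just over models.
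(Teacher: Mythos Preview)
Your proposal is correct and follows exactly the paper's route: prove exchange by the contrapositive, showing that if $a\notin\cl(A)$ and $b\notin\cl(Aa)$ then $(a,b)$ is a generic Morley sequence over $A$, flip it by symmetry to get that $(b,a)$ is one too, and read off $a\notin\cl(Ab)$. Your bookkeeping worry dissolves once you note that in this paper ``$\tp(x/A)$ is generic'' simply \emph{means} $\tp(x/A)=\bar p|A$ (this is how $\cl$ is defined in Proposition~\ref{closure}), so $b\notin\cl(Aa)$ is literally $b\models\bar p|Aa$ and there is no separate uniqueness-of-non-algebraic-type issue to resolve.
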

\begin{proof} We only have to prove the exchange property. So consider $a\in\cl(Ab)\setminus\cl(A)$. Then $a\notin\cl(A)$, so $\tp(a/A)$ is generic. Suppose $b\notin\cl(Aa)$. Then $\tp(b/Aa)$ is generic, and $(a,b)$ is a generic Morley sequence over $A$. By symmetry, $(b,a)$ is a generic Morley sequence over $A$. In particular, $a\notin\cl(Ab)$, a contradiction. Thus, $\cl$ satisfies the exchange property.
\end{proof}

It follows that the generic type $p$ is generically stable, and orthogonal to all non-generic types (see \cite{PT} for the definitions; we will not use this terminology in this paper).

Now, it is straightforward to deduce Theorem \ref{T2} from \cite[Theorem 1.13]{HLS}, where it is proved that any field carrying a
pre-geometry with certain homogeneity properties is algebraically
closed; it is based on Macintyre's proof that $\omega_1$-categorical fields are algebraically closed \cite{mac}. Here, we will give an alternative proof based on an argument of Wheeler \cite[Theorem 2.1]{Wh}; it was also used by Pillay
\cite[Proposition 5.2]{P} to prove that $\omega$-stable fields are
algebraically closed.

\begin{lem}\label{roots} If $K$ is a minimal field, then $(K^*)^n=K$ for any $n>0$. In particular $K$ is perfect.\end{lem}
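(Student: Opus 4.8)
The plan is to exploit that the set of $n$-th powers in $K$ is definable, to show it is cofinite, and then to invoke a standard group-theoretic fact to conclude it is everything.

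First I would set $N_n:=\{x^n:x\in K^*\}$, a subgroup of the multiplicative group $K^*$ which is definable by the formula $\exists y\,(y\neq 0\wedge x=y^n)$. As $K$ is minimal it is infinite, so $K^*$ is infinite; and the endomorphism $x\mapsto x^n$ of $K^*$ has kernel the set of $n$-th roots of unity in $K$, which has at most $n$ elements because $K$ is a field. Hence $N_n$ is infinite, so by minimality of $K$ the set $N_n$ is cofinite in $K$, and a fortiori cofinite in $K^*$.

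Next I would use that a cofinite subgroup $H$ of an infinite group $G$ must equal $G$: if some $g\in G\setminus H$ existed, then the coset $gH$ would be disjoint from $H$ and cofinite (being a translate of the cofinite set $H$), contradicting that two cofinite subsets of an infinite set always intersect. Applying this with $G=K^*$ and $H=N_n$ gives $N_n=K^*$, i.e.\ every nonzero element of $K$ is an $n$-th power; together with $0=0^n$ this yields $(K^*)^n=K$. For perfectness there is nothing to do in characteristic $0$, while in characteristic $p>0$ the case $n=p$ shows the Frobenius $x\mapsto x^p$ is onto $K^*$, and it also fixes $0$, so $K^p=K$ and $K$ is perfect.

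I do not expect a genuine obstacle here; the only point needing a little care is checking that $N_n$ is infinite rather than finite, so that minimality forces it to be cofinite — and this is precisely where the finiteness of the group of $n$-th roots of unity in a field enters.
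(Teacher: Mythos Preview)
Your proof is correct and follows exactly the same line as the paper's: $(K^*)^n$ is an infinite definable subgroup of $K^*$, hence cofinite by minimality, hence all of $K^*$. You have simply filled in the details (the finite-kernel argument for infiniteness, the coset argument that a cofinite subgroup is everything, and the deduction of perfectness) that the paper leaves implicit in its one-line proof.
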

\begin{proof} $(K^*)^n$ is an infinite subgroup of $K^*$, so, by minimality, it is co-finite and $(K^*)^n=K^*$.\end{proof}

\begin{thm_proof} A symmetric minimal field is algebraically closed.
\end{thm_proof}
\begin{proof} Let $K$ be a symmetric minimal field, $\bar K\succ K$ a monster model, and $p$ the generic type.
Let $F$ denote an algebraic closure of $\bar K$. Suppose for a
contradiction that some $\alpha_1\in F\setminus K$ is algebraic
over $K$. Let $f(x)=x^n+a_{n-1}x^{n-1}+\dots+a_0\in K[x]$ be the
minimal polynomial of $\alpha_1$ over $K$. Since $K$ is perfect,
$f$ has $n$ pairwise distinct roots $\alpha_1,\dots,\alpha_n$ in
$F$. Let $(t_0,\dots,t_{n-1})\in\bar K^n$ be a Morley sequence in
$p$ over $K$, and define $r_i=
t_0+t_1\alpha_i+\dots+t_{n-1}\alpha_i^{n-1}$ for $i=1,\dots,n$.
Then
$$\left(
\begin{array}{cccccccccc}
  1 & \alpha_1 & \dots  &\alpha_1^{n-1}   \\
 1 & \alpha_2 & \dots  &\alpha_2^{n-1}   \\
\vdots & \vdots & \dots  &\vdots   \\
1 & \alpha_n & \dots  &\alpha_n^{n-1}   \\
\end{array}\right)\left(
\begin{array}{cccccccccc}
  t_0   \\
t_1  \\
\vdots    \\
t_{n-1}   \\
\end{array}\right)=\left(
\begin{array}{cccccccccc}
  r_1   \\
r_2  \\
\vdots    \\
r_{n}   \\
\end{array}\right)$$
and, since the matrix is invertible, $(r_1,\dots,r_n)$ and
$(t_0,\dots,t_{n-1})$ are interalgebraic over
$K$.

Let $c_0,\dots,c_{n-1}$ be the symmetric functions of
$r_1,\dots,r_n$. Then the sequences $(c_0,\dots,c_{n-1})$ and
$(r_1,\dots,r_n)$ are interalgebraic over $K$, hence
$(t_0,\dots,t_{n-1})$ and $(c_0,\dots,c_{n-1})$ are interalgebraic
over $K$, too. Since $c_i,t_j$ are in $\bar K$, we can apply
Proposition \ref{exchange}  to conclude that $(c_0,\dots,c_{n-1})$
is a Morley sequence in $p$ over $K$. But for generic $x$ over
$c_1,\ldots,c_{n-1}$ the element
$$c'=-(x^{n}+c_{n-1}x^{n-1}+\dots+c_1x)$$
is again generic by Proposition \ref{exchange}, so it has the same type over $c_1,\ldots,c_{n-1}$ as $c_0$, and there is an automorphism $\sigma$ fixing $c_1,\ldots,c_{n-1}$ and moving $c'$ to $c_0$. Then $\sigma(x)$ is a zero of the polynomial
$$z^{n}+c_{n-1}z^{n-1}+\dots+c_1z+c_0.$$
Thus there is  $i$ such that
$$\sigma(x)=r_i=t_0+t_1\alpha_i+\dots+t_{n-1}\alpha_i^{n-1}\in \bar K.$$
This means that the degree of the minimal polynomial of $\alpha_i$
over $K(t_0,\dots,t_{n-1})$ is smaller than $n$. Since
$K(t_0,\dots,t_{n-1}) \subseteq {\bar K}$, this implies that the
degree of the minimal polynomial of $\alpha_i$ over ${\bar K}$ is
also smaller than $n$. On the other hand, as $K \prec {\bar K}$
and $f(x) \in K[x]$ is the minimal polynomial of $\alpha_i$ over
$K$ (so it is irreducible in $K[x]$), we get that $f(x)$ is also
irreducible in ${\bar K}[x]$, and so it is the minimal polynomial
of $\alpha_i$ over ${\bar K}$. This is a contradiction, because
$\deg(f)=n$.
\end{proof}

Notice the following consequence of Proposition \ref{P1} and Theorem \ref{T2}.
\begin{cor}
Each minimal field whose theory does not have the strict order property is algebraically closed. In particular, each minimal field whose theory is simple is algebraically closed.
\end{cor}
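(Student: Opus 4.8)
The plan is to read this corollary off the dichotomy of Proposition~\ref{P1} together with the theorem just established, that a symmetric minimal field is algebraically closed. First I would note that a minimal field $K$, viewed as a first-order structure, carries a definable group operation (addition), hence is in particular a minimal group in the sense of Lemma~\ref{L11}; consequently its generic type $p\in S_1(K)$ is definable over $\emptyset$. This places $K$ in the setting of Section~\ref{Section 2}, so Proposition~\ref{P1} applies to it.

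Now suppose $\Th(K)$ does not have the strict order property, and let $(a,b)$ be a generic Morley sequence over $K$. By Proposition~\ref{P1}, $K$ is either symmetric or asymmetric. If $K$ were asymmetric, then Proposition~\ref{P1} would produce a $K$-definable strict partial order $<$ with $K<a<b$ and with infinite increasing chains; but a formula (with parameters) defining a partial order that admits an infinite chain is precisely a witness to the strict order property of $\Th(K)$, contradicting our hypothesis. Hence $K$ is symmetric, and by the theorem proved above $K$ is algebraically closed. (Equivalently, by Remark~\ref{Remnon-sym} symmetry of $K$ means that $K$ is unordered, so one may instead invoke Theorem~\ref{T2} directly.)

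For the second assertion, recall the standard fact that a simple theory does not have the strict order property; indeed the strict order property already rules out simplicity. Thus a minimal field whose theory is simple falls under the first assertion and is therefore algebraically closed. I do not expect any real obstacle here: the statement is a direct corollary, and the only point deserving a moment's care is the implication ``$K$ asymmetric $\Rightarrow$ a parameter-definable partial order with an infinite chain $\Rightarrow$ strict order property,'' which is immediate from the asymmetric clause of Proposition~\ref{P1}.
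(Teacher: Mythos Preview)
Your proposal is correct and follows exactly the approach the paper intends: the corollary is stated as an immediate consequence of Proposition~\ref{P1} and Theorem~\ref{T2}, and you have simply spelled out the obvious details (definability of the generic type via Lemma~\ref{L11}, the asymmetric case yielding a definable partial order with infinite chains and hence the strict order property, and the standard fact that simplicity precludes the strict order property).
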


\section{Asymmetric minimal groups}\label{Section 4}

In this section, we shall show some general properties of
asymmetric minimal groups. In particular, an almost linear minimal group is either  elementary abelian or a finite sum of Pr\"ufer $p$-groups for some prime $p$; it follows that there is no almost linear minimal field.
Unfortunately, as far as asymmetric minimal groups in general are concerned, the following questions are still open; an affirmative answer would immediately imply Podewski's conjecture:

\begin{que}Is every asymmetric minimal group almost linear? Is it at least torsion?\end{que}

Given a minimal group $G$, a {\em generic element} is an element which is  generic over $G$, i.e.\ a realization (in a monster model $\bar G$) of the unique generic type $p \in S_1(G)$.
By Proposition \ref{P1} and Remark \ref{Remadditional}, whenever we are working in an asymmetric minimal group $(G,<,+,0,\dots)$, we can and do assume that $<$ is a directed, well, strict partial order with an infinite increasing chain and with no such chain of order type $\omega+1$, and such that $G<g$ for any generic $g$.

\begin{lem}\label{lincom}
Let $(G,<,+,0,\dots)$ be an asymmetric minimal group, $g$
generic, $g_1,\ldots,g_k<g$, and $n_1,\ldots,n_k$  integers. Then
$n_1g_1+\cdots+n_kg_k\not= g$.
\end{lem}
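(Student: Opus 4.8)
The plan is to argue by contradiction, showing more precisely that for fixed $k$ and $\bar n$ the parameter-free formula
$$\theta(x):\quad \exists y_1\cdots\exists y_k\Big(\bigwedge_{i=1}^k y_i<x\ \wedge\ x=n_1y_1+\cdots+n_ky_k\Big)$$
cannot lie in the generic type $p$. So assume, towards a contradiction, that $g=n_1g_1+\cdots+n_kg_k$ with each $g_i<g$ and $g$ generic; discarding the terms with $n_i=0$ (not all of them can vanish, since $g=0\in G$ would contradict genericity of $g$) we may assume $k\geq 1$ and all $n_i\neq 0$. Then $g\models\theta$ and $\tp(g/G)=p$, so $\theta\in p$; since $p$ is non-algebraic, $\theta(G)$ is infinite, hence co-finite in $G$ by minimality. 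Write $Z:=G\setminus\theta(G)$, a finite set. The goal is to deduce the (impossible) conclusion that $G$ is finitely generated.

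For each $x\in\theta(G)$ I would fix a witness of $\theta(x)$ inside $G$, i.e.\ elements $w_1(x),\dots,w_k(x)\in G$ with $w_i(x)<x$ for all $i$ and $x=\sum_i n_i w_i(x)$. The crucial feature of the situation is that $(G,<)$ is well-founded, which is part of the standing assumption that for an asymmetric minimal group $<$ is a directed well partial order (Proposition \ref{P1}, Remark \ref{Remadditional}). One can therefore run well-founded induction on $(G,<)$ to prove that every $x\in G$ lies in the subgroup $\langle Z\rangle$ generated by $Z$: if $x$ is $<$-minimal then it cannot belong to $\theta(G)$ (otherwise $w_1(x)<x$), so $x\in Z\subseteq\langle Z\rangle$; and if $x\in\theta(G)$ then $x=\sum_i n_iw_i(x)$ with each $w_i(x)<x$, hence each $w_i(x)\in\langle Z\rangle$ by the inductive hypothesis, and so $x\in\langle Z\rangle$. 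Thus $G=\langle Z\rangle$.

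To conclude, note that the group reduct $(G,+)$ is again minimal — any subset of $G$ definable in $(G,+)$ is definable in the full structure, hence finite or co-finite — and it is infinite, so by Reineke's classification \cite{R} it is either abelian divisible with finitely many elements of each finite order, or elementary abelian of prime exponent. An infinite group of either kind fails to be finitely generated (a finitely generated divisible abelian group is trivial, and $(\mathbb{Z}/p)^{(\kappa)}$ is finite when $\kappa$ is finite), contradicting $G=\langle Z\rangle$; this finishes the proof.

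The one point that needs care is the appeal to well-foundedness of $<$ on $G$: it is exactly what makes the inductive reduction of an arbitrary element of $\theta(G)$ down to the finite set $Z$ terminate. This is also why the argument must be carried out inside $G$ rather than with the generic element directly — minimality is not preserved under elementary equivalence, so $<$ need not remain well-founded in the monster $\bar G$, and one cannot simply manufacture an infinite $<$-descending chain of generic elements; instead one leans on minimality of $G$ together with Reineke's theorem.
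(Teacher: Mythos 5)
Your proof is correct and follows essentially the same route as the paper: minimality makes the formula $\theta$ hold co-finitely, well-foundedness of $<$ lets you reduce everything to the finite exceptional set, and Reineke's classification rules out $G$ being finitely generated. The only difference is presentational — the paper takes a single $<$-minimal element of $G\setminus\langle X\rangle$ (where $X$ is the exceptional set) and gets an immediate contradiction, which is just the contrapositive packaging of your well-founded induction showing $G=\langle Z\rangle$.
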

\begin{proof} Suppose $n_1g_1+\cdots+n_kg_k= g$. Then
\begin{equation}\label{eqn1}\exists x_1,\ldots, x_k\ \left(\bigwedge_{i=1}^kx_i<x\land\sum_{i=1}^kn_ix_i= x\right)\end{equation}
holds generically, and hence outside a finite set $X\subset G$. By
Reineke's result, $G$ is either elementary abelian, or abelian
divisible. In either case, any finitely generated subgroup is a proper
subgroup of $G$. Let $H$ be the subgroup generated by $X$.
Consider a minimal element $a$ in $G\setminus H$. As (\ref{eqn1}) holds outside $X\subseteq H$, it is satisfied by $a$. But any $a_1,\ldots, a_k<a$  are in $H$ by the minimality of $a$, as is $\sum_{i=1}^kn_ia_i$. So
$\sum_{i=1}^kn_ia_i\neq  a$, a contradiction.
\end{proof}

There is also a corresponding version for asymmetric minimal fields:
\begin{lem}\label{lincom_fields}
Let $(K,<,+,\cdot, 0,1,\dots)$ be an asymmetric minimal field,  $g$
generic, and $g_1,\ldots,g_k<g$. Let $f(x_1,\dots,x_k)$ be a rational function over $K$ such that the tuple $(g_1,\dots,g_k)$ is in its domain. Then $f(g_1,\dots,g_k)\not= g$.\end{lem}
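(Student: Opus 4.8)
The plan is to follow the proof of Lemma \ref{lincom} almost verbatim, replacing the appeal to Reineke's classification (used there only to guarantee that finitely generated subgroups of $G$ are proper) by the fact that a minimal field is never finitely generated as a field. First I would fix a reduced representation $f=P/Q$ with $P,Q\in K[x_1,\dots,x_k]$, so that $(g_1,\dots,g_k)\in\dom f$ amounts to $Q(g_1,\dots,g_k)\neq 0$, and list the coefficients $c_1,\dots,c_m\in K$ of $P$ and $Q$. Suppose towards a contradiction that $f(g_1,\dots,g_k)=g$ with $g$ generic over $K$ and $g_1,\dots,g_k<g$. Then the formula
$$\psi(x):\quad\exists x_1,\dots,x_k\Bigl(\bigwedge_{i=1}^k x_i<x\ \wedge\ Q(x_1,\dots,x_k)\neq 0\ \wedge\ P(x_1,\dots,x_k)=x\cdot Q(x_1,\dots,x_k)\Bigr)$$
has parameters only from $K$ and holds of $g$ (witnessed by $x_i=g_i$); since $g$ realizes the generic type and the latter is the unique non-algebraic $1$-type over $K$, the set $\psi(K)$ is infinite, hence co-finite by minimality, so $\psi$ holds of every element of $K$ outside some finite set $X\subset K$.

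Next, the one new ingredient: \emph{a minimal field $K$ is not finitely generated as a field}. Indeed, by Lemma \ref{roots} we have $(K^*)^n=K^*$ for all $n\geq 1$, so $K^*$ is divisible; but every infinite finitely generated field $F$ carries a surjective discrete valuation $v\colon F^*\to\mathbb Z$ (coming from a finite prime when $F$ is a number field, and from a prime divisor when $\trdeg F>0$), and an element of value $1$ then has no $n$-th root for $n\geq 2$, whence $(F^*)^n\subsetneq F^*$. Consequently the subfield $H\leq K$ generated by the finite set $X\cup\{c_1,\dots,c_m\}$ is a \emph{proper} subfield of $K$.

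Finally I would finish exactly as in Lemma \ref{lincom}. Since $<$ may be assumed to be a well partial order (Proposition \ref{P1} and Remark \ref{Remadditional}), the non-empty set $K\setminus H$ has a minimal element $a$, and $a\notin X$ because $X\subseteq H$; hence $\psi(a)$ holds, yielding $a_1,\dots,a_k<a$ (necessarily in $K$) with $Q(a_1,\dots,a_k)\neq 0$ and $P(a_1,\dots,a_k)=a\cdot Q(a_1,\dots,a_k)$. By minimality of $a$ in $K\setminus H$ each $a_i$ lies in $H$, and since $H$ is a subfield containing all the $c_j$ we get $P(a_1,\dots,a_k),Q(a_1,\dots,a_k)\in H$ with $Q(a_1,\dots,a_k)\neq 0$, so $a=P(a_1,\dots,a_k)/Q(a_1,\dots,a_k)\in H$ --- a contradiction. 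The main point at which the field case genuinely differs from the group case is the non-finite-generation of $K$; I do not expect this to be difficult given Lemma \ref{roots}, but it is the one spot where a new argument is needed, everything else being a routine transcription.
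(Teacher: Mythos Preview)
Your proof is correct and follows essentially the same route as the paper's: assume the generic element is a rational combination of smaller ones, pass to a co-finite set where the corresponding first-order property holds, use Lemma~\ref{roots} to see that $K$ is not finitely generated, and derive a contradiction from a $<$-minimal element outside the subfield generated by the exceptional set. Your version is in fact slightly more careful than the paper's in explicitly adjoining the coefficients $c_1,\dots,c_m$ of $P,Q$ to the generating set of $H$ (the paper just takes the subfield generated by $X$ and tacitly assumes $f$ preserves it), and in spelling out via discrete valuations why closure under $n$-th roots precludes finite generation; the paper simply asserts the latter.
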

\begin{proof}  Suppose $f(g_1,\dots,g_k)= g$. Then
\begin{equation}\label{eqn3}\exists \overline{x}=(x_1,\ldots, x_k)\ \left(\bigwedge_{i=1}^kx_i<x\land  \overline{x} \in \dom(f) \land f(\overline{x})= x\right)\end{equation}
holds generically, and hence outside a finite set $X\subset K$.
Let $F$ be the subfield of $K$ generated by $X$.
Since $K$ is minimal, it is closed under $n$-th roots for all $n>0$ by Lemma \ref{roots} and thus not finitely generated. Hence $F$ is a proper subfield; consider a minimal element $a$ in $K\setminus F$. As (\ref{eqn3}) holds outside $X\subseteq F$, it is satisfied by $a$. Consider any $a_1,\ldots, a_k<a$ such that $(a_1,\dots,a_k) \in \dom(f)$. Then $a_1,\dots,a_k \in F$ by
the minimality of $a$, hence $f(a_1,\dots,a_k) \in F$, and so
$f(a_1,\dots,a_k)\neq  a$, a contradiction.\end{proof}

Recall that a divisible abelian group $G$ splits as a direct sum of the torsion subgroup $Tor(G)$ and a direct sum of copies of ${\mathbb Q}$; furthermore, $Tor(G)$ is a direct sum of some numbers of copies of Pr\"ufer $p$-groups, where $p$ ranges over prime numbers.
Under the assumption that $G$ is not a finite sum of Pr\"ufer groups, we can strengthen the conclusion of Lemma \ref{lincom}.
\begin{lem}\label{lincom_gen}
Let $(G,<,+,0,\dots)$ be an asymmetric minimal group. Let $g$ be generic, $g_1,\ldots,g_k<g$, and $n_1,\ldots,n_k,n$ integers with $ng \ne 0$. Then
$n_1g_1+\cdots+n_kg_k\not= ng$, or $G$ is a finite sum of Pr\"ufer $p$-groups for some primes $p$ dividing $n$ (possibly with repetitions).\end{lem}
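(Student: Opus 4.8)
The plan is to mimic the proof of Lemma \ref{lincom}, but to replace the subgroup generated by the exceptional finite set $X$ with a ``definable closure''-type subgroup that is still guaranteed to be proper unless $G$ is a finite sum of Pr\"ufer $p$-groups for primes $p\mid n$. First I would suppose $n_1g_1+\cdots+n_kg_k=ng$ for some generic $g$ and some $g_1,\ldots,g_k<g$. Then the formula
\begin{equation*}\exists x_1,\ldots,x_k\ \Bigl(\bigwedge_{i=1}^k x_i<x\ \land\ \sum_{i=1}^k n_i x_i=nx\Bigr)\end{equation*}
holds generically, hence outside some finite set $X\subset G$. Exactly as before, for any $a$ satisfying this formula, every witness $a_1,\ldots,a_k<a$ together with $\sum_i n_i a_i=na$ forces $na$ to lie in whatever subgroup already contains $a_1,\ldots,a_k$. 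So if we can find a proper subgroup $H\supseteq X$ that is closed downward (i.e.\ $y<a\in H\Rightarrow y\in H$) and closed under ``division by $n$'' in the weak sense that $na\in H\Rightarrow a\in H$, then taking $a$ minimal in $G\setminus H$ yields a contradiction. The existence of such an $H$ (proper!) is exactly where the exceptional conclusion ``$G$ is a finite sum of Pr\"ufer $p$-groups for $p\mid n$'' must enter.

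The key step is therefore the following dichotomy: either the subgroup $H$ generated by $X$ together with all elements $a$ with $na\in\langle X\rangle$ and iteratively all downward-$<$-predecessors is proper, or $G$ is already a finite sum of Pr\"ufer $p$-groups for primes $p$ dividing $n$. I would argue this via Reineke's classification. If $G$ is elementary abelian of exponent $p$: if $p\nmid n$ then $n$ is invertible on $G$, so $ng\ne 0$ and the ``division by $n$'' closure is trivial, $H=\langle X\rangle$ is finite, certainly proper, done; if $p\mid n$ then $ng=0$ for all $g$, contradicting $ng\ne0$, so this case does not occur. So the interesting case is $G$ abelian divisible. Write $G=\mathrm{Tor}(G)\oplus D$ with $D$ a direct sum of copies of $\mathbb Q$, and $\mathrm{Tor}(G)=\bigoplus_p \mathrm{Tor}_p(G)$ with each $\mathrm{Tor}_p(G)$ a direct sum of copies of the Pr\"ufer group $\mathbb Z(p^\infty)$. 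The subgroup $H$ I want to build is the smallest subgroup containing $X$, closed downward under $<$, and satisfying $na\in H\Rightarrow a\in H$. The point is that ``dividing by $n$'' only ever multiplies the relevant countable generating data by a bounded factor at each prime $p\mid n$ (an extra factor $p^{v_p(n)}$ in the $p$-primary torsion, and division by $n$ in $D$), and passing to downward $<$-predecessors only adds a finite set each time one goes one step down a well-founded order without a chain of type $\omega+1$ --- so $H$ remains countably generated, in fact ``small'' in the sense of being built from $X$ by countably many such moves. If $D\ne 0$ or if some $\mathrm{Tor}_p(G)\ne 0$ for $p\nmid n$ or if $\mathrm{Tor}_p(G)$ has infinite rank for some $p\mid n$ or if infinitely many primes occur, then $G$ is strictly larger than any such $H$: there is always a fresh direct summand (a copy of $\mathbb Q$, or a Pr\"ufer $q$-group with $q\nmid n$, or another copy of $\mathbb Z(p^\infty)$ beyond the finitely many generated) untouched by the construction. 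Hence $H$ is proper unless $G=\bigoplus_{p\mid n}\mathbb Z(p^\infty)^{r_p}$ with each $r_p$ finite --- i.e.\ $G$ is a finite sum of Pr\"ufer $p$-groups for primes $p\mid n$.

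Having fixed such a proper $H\supseteq X$, I would take $a$ a minimal element of $G\setminus H$ (which exists since $<$ is a well partial order, by the standing assumptions recalled before Lemma \ref{lincom}). Since $X\subseteq H$, the displayed formula holds at $a$, giving witnesses $a_1,\ldots,a_k<a$ with $\sum_i n_i a_i=na$. By minimality of $a$ each $a_i\in H$, hence $na=\sum_i n_i a_i\in H$, and then by the ``division by $n$'' closure of $H$ we get $a\in H$ --- contradicting $a\in G\setminus H$. Here the hypothesis $ng\ne 0$ is used only to rule out the degenerate elementary-abelian case with $p\mid n$; in the divisible case $na\ne 0$ is automatic once $a\ne 0$, and $a\ne0$ may be assumed since $0\in H$.

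The main obstacle I anticipate is making the construction of $H$ both genuinely proper and genuinely closed under the required operations simultaneously: the downward closure under $<$ and the closure under division by $n$ interact, and one must check that iterating them does not accidentally exhaust $G$. The well-foundedness of $<$ together with the absence of a chain of type $\omega+1$ is what keeps the downward-closure step controlled (each step down adds only finitely much), and Reineke's structure theorem for $G$ is what pins down exactly when the division-by-$n$ closure can exhaust $G$ --- namely precisely in the stated exceptional case. A cleaner bookkeeping alternative, which I would probably adopt in the write-up, is to not build $H$ by transfinite iteration at all but instead, assuming $G$ is \emph{not} a finite sum of Pr\"ufer $p$-groups for $p\mid n$, to exhibit directly a definable (or at least invariant) proper subgroup containing $X$ and closed under the two operations, using a suitable direct-sum complement provided by the structure theorem; this sidesteps the iteration entirely and isolates the only real content, the group-theoretic dichotomy.
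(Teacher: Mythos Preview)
Your approach is essentially the paper's, but you impose an extra condition on $H$ that creates the very obstacle you then worry about. You require $H$ to be downward-closed under $<$; the paper does not. There $H$ is simply the $n$-divisible hull of $\langle X\rangle$, namely $\{h\in G:\ n^m h\in\langle X\rangle\text{ for some }m\}$. The final step only needs that $a_i<a$ implies $a_i\in H$, and this follows from the \emph{minimality} of $a$ in $G\setminus H$, not from any closure property of $H$ itself. (You already write ``By minimality of $a$ each $a_i\in H$'' in your last paragraph, so you have this.) Once downward closure is dropped, properness of $H$ is a direct case check via Reineke: if $G$ is elementary abelian of exponent $p$ then $p\nmid n$ (since $ng\ne0$), multiplication by $n$ is a bijection, and $H=\langle X\rangle$ is finite; if $G$ contains a copy $Q$ of $\mathbb Q$ then $(n+1)^{-k}1_Q\notin H$ for large $k$; otherwise $G$ is torsion divisible, and if it is not already a finite sum of Pr\"ufer $p$-groups for $p\mid n$ it contains a Pr\"ufer $q$-group $P$ with $q\nmid n$, whence $H\cap P$ is finite. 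No iteration, no interaction between two closure operations to control --- your ``main obstacle'' simply disappears.
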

\begin{proof}
The proof is very similar to the previous one. Assuming for a contradiction that  $n_1g_1+\cdots+n_kg_k= ng$, we get that the set $X\subset G$ of realizations of the negation of the formula
\begin{equation}\label{eqn2}\exists x_1,\ldots, x_k\ \left(\bigwedge_{i=1}^kx_i<x\land\sum_{i=1}^kn_ix_i= nx\right)\end{equation}
is finite. Let $H$ be the $n$-divisible hull of the subgroup generated by $X$, i.e.,\ the collection of all elements $h \in G$ such that $n^mh \in \langle X \rangle$ for some $m\in \mathbb{N}$. If $H \ne G$, we finish as before by considering a minimal element $a \in G \setminus H$. So it remains to show that $H$ is a proper subgroup of $G$. By Reineke's result, either $G$ is elementary abelian of prime exponent $p$ and $H$ is finite (note that $ng \ne 0$ implies that $p$ does not divide $n$),
or $G$ is divisible with only finitely many elements of any given finite order. If it contains a copy $Q$ of $\mathbb Q$, then $(n+1)^{-k}1_Q\notin H$ for sufficienly big $k$. Otherwise $G$ contains a copy $P$ of a Pr\"ufer $p$-group for some $p$ not dividing $n$, so $H\cap P$ must be finite.\end{proof}

\begin{cor}\label{newlemma}
Let $(G,<,+,0,\dots)$ be an asymmetric minimal group. If $g$ is
generic and $h<g$, then $\pm g\pm h\sim g$.
\end{cor}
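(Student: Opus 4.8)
The plan is to unfold $\pm g\pm h\sim g$ into the eight non-comparabilities $\epsilon_1 g+\epsilon_2 h\not<g$ and $g\not<\epsilon_1 g+\epsilon_2 h$ (for $\epsilon_1,\epsilon_2\in\{+1,-1\}$) and to obtain each from Lemma \ref{lincom}. Fix the signs and put $e=\epsilon_1 g+\epsilon_2 h$; since $\epsilon_1^2=1$ we also have $g=\epsilon_1 e-\epsilon_1\epsilon_2 h$.

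First I treat $e\not<g$: if $e<g$, then $g=\epsilon_1 e-\epsilon_1\epsilon_2 h$ writes the generic element $g$ as an integer combination of $e$ and $h$, both of which lie strictly below $g$ (the former by assumption, the latter by hypothesis), contradicting Lemma \ref{lincom}. This handles all four sign patterns at once.

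Next I treat $g\not<e$. Suppose $g<e$; then $h<g<e$, so $g$ and $h$ both lie strictly below $e$, and the identity $e=\epsilon_1 g+\epsilon_2 h$ presents $e$ as an integer combination of elements strictly below it. Hence, as soon as I know that $e$ is generic over $G$, Lemma \ref{lincom} applied with $e$ in place of the generic element gives a contradiction. The only real work is in establishing this genericity. Certainly $e\notin G$, for otherwise the standing assumption $G<g$ would force $e<g$, contradicting $g<e$; so suppose for contradiction that $e\in\acl(G)\setminus G$. Pick an algebraic formula over $G$ satisfied by $e$ and delete its finitely many solutions that lie in $G$: this yields a formula $\theta(x)$ over $G$ such that $\theta(\bar G)$ is finite, $\theta(e)$ holds, and no element of $G$ satisfies $\theta$. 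Now consider the formula over $G$
$$\nu(x):=\exists y\,\exists z\,\bigl(\theta(z)\wedge x<z\wedge y<x\wedge z=\epsilon_1 x+\epsilon_2 y\bigr).$$
The pair $(y,z)=(h,e)$ witnesses $\nu(g)$ (using $h<g<e$, $\theta(e)$, and $e=\epsilon_1 g+\epsilon_2 h$), so $\nu$ belongs to the generic type $p$ of $G$; by minimality $\nu(G)$ is co-finite, in particular non-empty. But any $a\in G$ satisfying $\nu$ yields, since $G\prec\bar G$, witnesses $b,d\in G$ with $\theta(d)$ — contradicting the choice of $\theta$. Therefore $e$ is generic over $G$, and the reduction above completes the proof.

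Apart from the routine sign bookkeeping, the single delicate point is the genericity of $e=\epsilon_1 g+\epsilon_2 h$ under the assumption $g<e$; I expect this to be the main obstacle. The mechanism that removes it is the same reflection-along-the-generic-type argument used in the proof of Lemma \ref{lincom}: the equation $e=\epsilon_1 g+\epsilon_2 h$ reflected inside $G$ would place a solution of the algebraic formula $\theta$ into $G$, which is impossible once the $G$-solutions of $\theta$ have been removed.
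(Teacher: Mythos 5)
Your proof is correct and follows essentially the same route as the paper: split into the cases $\pm g\pm h<g$ and $g<\pm g\pm h$, and in each case contradict Lemma \ref{lincom}, applied to the generic element $g$ in the first case and to $\pm g\pm h$ in the second. The only difference is your elaborate argument for the genericity of $\pm g\pm h$, which is unnecessary: since $G\prec\bar G$ we have $\acl(G)=G$, so every element of $\bar G\setminus G$ realizes the unique non-algebraic type over $G$, and $g<\pm g\pm h$ already forces $\pm g\pm h\notin G$ --- which is why the paper's proof simply asserts that $\pm g\pm h$ is generic.
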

\begin{proof} Fix any choice of $\pm$ in $\pm g \pm h$. If $\pm g\pm h>g$, we have that $g,h<\pm g\pm h$ and $\pm g\pm h$ is generic; if $\pm g\pm h<g$, we have $h,\pm g\pm h<g$; in either case, we contradict Lemma \ref{lincom}.
\end{proof}

\begin{cor}\label{newcor}
Let  $(G,<,+,0,\dots)$ be an asymmetric minimal group and let $g$
be generic.  Then $g\nless ng$ for any integer $n$. If $ng\not=0$ and $G$ is not a sum of finitely many Pr\"ufer $p$-groups for primes $p$ dividing $n$, then $ng\nless g$, whence $g \sim ng$. Moreover, $g \sim g'$ for any $g' \in \frac{1}{n} g$.
\end{cor}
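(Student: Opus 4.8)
The plan is to obtain all three assertions from Lemmas~\ref{lincom} and~\ref{lincom_gen}, applied with a cleverly chosen ``generic element''; the one preliminary fact needed is that a nonzero integer multiple of a generic element is again generic, and so is any element of $\frac1n g$. To see this, note that the $n$-torsion subgroup $\{x\in G:nx=0\}$ is definable; were it infinite it would be co-finite by minimality, hence all of $G$, forcing $ng=0$ and contradicting $ng\neq 0$. So the $\emptyset$-definable map $\mu_n\colon x\mapsto nx$ has finite kernel, hence is finite-to-one, so the $\mu_n$-preimage of any finite $G$-definable set is finite and cannot contain the generic $g$; thus $ng=\mu_n(g)\notin\acl(G)$, i.e.\ $ng$ realizes $p$. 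Conversely, if $ng'=g$ then $g\in\dcl(g'G)$, so $g'$ algebraic over $G$ would make $g$ algebraic over $G$ as well, against genericity; hence such a $g'$ is generic.

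For the first assertion, if $ng=0$ (in particular if $n=0$), then $ng=0<g$ since $G<g$, so $g\nless ng$ by antireflexivity; if $n=1$ then $g\nless g$ likewise. Otherwise $ng\neq 0$, so $ng$ is generic by the preliminary, and I would apply Lemma~\ref{lincom} \emph{with $ng$ playing the role of its generic element}: if $g<ng$, then $g$ is an element below $ng$ and the integer combination $n\cdot g$ equals $ng$, so the lemma gives $ng\neq ng$, absurd. Hence $g\nless ng$ for every integer $n$.

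For the second assertion, assume $ng\neq 0$ and that $G$ is not a finite sum of Pr\"ufer $p$-groups for primes $p\mid n$. If $ng<g$, apply Lemma~\ref{lincom_gen} with $g$ as the generic element, with the single element $g_1:=ng<g$, coefficient $1$, and the integer $n$ (legitimate since $ng\neq 0$): it yields $1\cdot(ng)\neq n\cdot g$, i.e.\ $ng\neq ng$, or else that $G$ is a finite sum of Pr\"ufer $p$-groups for primes $p\mid n$ — both impossible. So $ng\nless g$, and with the first assertion $g\sim ng$. For the last claim, fix $g'$ with $ng'=g$ (if there is none the statement is vacuous); by the preliminary $g'$ is generic. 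The first assertion applied to $g'$ gives $g'\nless ng'=g$, and ``$ng\nless g$'' applied to $g'$ — valid because $ng'=g\neq 0$ and the Pr\"ufer hypothesis still holds — gives $g\nless g'$; hence $g\sim g'$.

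I do not anticipate a real obstacle: once the preliminary fact is in place, the rest is bookkeeping with Lemmas~\ref{lincom} and~\ref{lincom_gen}. The only thing needing attention is the choice of which element to feed in as the ``generic'' one — to exclude $g<ng$ one must use $ng$, not $g$, as the generic in Lemma~\ref{lincom}, which is precisely why $ng$ must first be shown generic — together with keeping track of when the Pr\"ufer alternative in Lemma~\ref{lincom_gen} is ruled out by hypothesis.
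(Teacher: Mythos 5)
Your proof is correct and follows essentially the same route as the paper: rule out $g<ng$ by applying Lemma~\ref{lincom} with $ng$ as the generic element, rule out $ng<g$ via Lemma~\ref{lincom_gen} with $g_1=ng$, $n_1=1$, and transfer to $g'\in\frac1n g$ using that $g'$ is generic. Your extra work on the genericity of $ng$ (finite kernel of $x\mapsto nx$) and the degenerate cases $ng=0$, $n=1$ is just a more explicit rendering of what the paper treats as immediate.
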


\begin{proof}
If $ng>g$, then clearly $ng$ is generic and it is a sum of strictly smaller elements, which contradicts Lemma \ref{lincom}.
If $ng<g$, we take $g_1=ng$ and $n_1=1$, contradicting Lemma \ref{lincom_gen}. Thus $g \sim ng$. If $g' \in \frac{1}{n}g$, then $g'$ is generic, so $g' \sim ng'=g$ by the first part of the proof.
\end{proof}

\begin{lem}\label{n-1sim} Let $G$ be an asymmetric minimal group, not of exponent dividing $n$, and $g$ generic. Then all elements of $\frac1n g$ are $\sim$-related.\end{lem}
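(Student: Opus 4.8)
The plan is to argue by homogeneity: if two elements of $\frac1n g$ were $<$-comparable, then an automorphism of $\bar G$ fixing $G$ pointwise would produce an infinite $<$-chain inside the finite set $\frac1n g$, which is absurd.

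First I would reduce to the divisible case. By Reineke's theorem $G$ is either elementary abelian of some prime exponent $p$, or abelian divisible with only finitely many elements of any given finite order. If $G$ has exponent $p$, then $G$ not being of exponent dividing $n$ forces $p\nmid n$, so multiplication by $n$ is an automorphism of $\bar G$ and $\frac1n g$ is a singleton — nothing to prove. So assume $G$ is divisible. Then $\bar G[n]=\{x:nx=0\}$ is finite (it equals $G[n]$, being a finite $\emptyset$-definable set), and, $\bar G$ being divisible, $\frac1n g$ is a nonempty coset of $\bar G[n]$; in particular it is a finite set.

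Next I would check that every $h\in\frac1n g$ is generic over $G$. The map $x\mapsto nx$ is $\emptyset$-definable with fibres the cosets of the finite group $\bar G[n]$, hence finite-to-one, so $h\in\acl(Gg)$. Since also $g=nh\in\acl(Gh)$, if $h$ were algebraic over $G$ then $g$ would be too, contradicting genericity of $g$. Thus all elements of $\frac1n g$ realize the generic type $p$ over $G$, so any two of them are conjugate by some $\sigma\in\mathrm{Aut}(\bar G/G)$.

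Now suppose, towards a contradiction, that $\frac1n g$ contains a $<$-comparable pair; by symmetry, say $h_1<h_2$ with $h_1,h_2\in\frac1n g$. Pick $\sigma\in\mathrm{Aut}(\bar G/G)$ with $\sigma(h_1)=h_2$. Then $\sigma(g)=\sigma(nh_1)=nh_2=g$, so $\sigma$ stabilizes $\frac1n g$ setwise. As $<$ is definable over $G$ it is $\sigma$-invariant, so applying $\sigma$ repeatedly to $h_1<h_2$ and setting $h_{k+1}:=\sigma(h_k)$ gives $h_k<h_{k+1}$ for all $k\ge1$; by transitivity $h_1<h_2<h_3<\cdots$ is strictly increasing, hence an injective sequence inside the finite set $\frac1n g$ — absurd. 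Thus $\frac1n g$ has no $<$-comparable pair, so (using also that $<$ is irreflexive) all its elements are pairwise $\sim$-related. The only genuine subtlety here is the verification that each element of $\frac1n g$ is generic, since that is precisely what supplies the conjugating automorphism and forces it to fix $g$; the rest is bookkeeping with Reineke's classification and the finiteness of $\bar G[n]$.
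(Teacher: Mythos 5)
Your proof is correct and takes essentially the same route as the paper: every element of $\frac1n g$ is generic over $G$ (otherwise $g=nh$ would be algebraic over $G$), so any two are conjugate by an automorphism over $G$ which fixes $g$ and hence permutes the finite fibre, and a comparable pair then contradicts finiteness --- the paper finishes via a minimal and a maximal element of the fibre, you by iterating the automorphism to build an infinite chain, which is only a cosmetic difference. Your opening reduction through Reineke's theorem is harmless but not needed, since $\frac1n g$ is finite already because $\bar G[n]$ is a proper definable subgroup of a minimal group.
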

\begin{proof} The finite set $X:=\{g':ng'=g\}$ must have a minimal element $g_0$ and a maximal element $g_1$, both generic over $G$. But
$$X=\{g':ng'=ng_0\}=\{g':ng'=ng_1\}$$
is invariant under an automorphism taking $g_0$ to $g_1$. Hence $g_0\sim g_1$.
\end{proof}

\begin{thm_proof}\label{torsion}
An almost linear minimal group $G$ is either elementary abelian, or it is a sum of finitely many Pr\"ufer $p$-groups for a fixed prime $p$. In particular, $G$ is a torsion group.
\end{thm_proof}

\begin{proof}
Suppose $G$ is a counter-example. Then $G$ divisible, and at least one of the following cases holds:
\begin{enumerate}
\item[1.] $G$ contains a copy $P$ of a  Pr\"ufer $p$-group, but is not a sum of Pr\"ufer $p$-groups,
\item[2.] $G$ contains a copy $P$ of $\mathbb{Q}$; in this case, put $p=2$.
\end{enumerate}

In any of these cases, by Corollary \ref{newcor}, the set
$$X:=\{ x \in G : x \nsim y \;\; \mbox{for some}\;\; y \in {\textstyle\frac1p} x\}$$
is finite; let $H$ be the subgroup generated by $X$. Then $H\cap P$ is a proper subgroup of $P$, so there is $a_0 \in P\setminus H$. Choose $a_k\in P$ with $pa_{k+1}=a_k$ for all $k < \omega$. Then all $a_k$'s are outside $H$, so transitivity of $\sim$ implies that $\{a_k:k<\omega\}$ is a $\sim$-antichain. Since it must be finite,
there is $j>i$ with $a_i=a_j$. Then $p^{j-i}a_0=a_0$, a contradiction with the fact that the order of $a_0$ is a power of $p$ in Case 1 and is infinite in Case 2.
\end{proof}

This implies immediately the non-existence of almost linear minimal fields.
\begin{thm_proof}There is no almost linear minimal field.\end{thm_proof}
\begin{proof} The multiplicative group of an infinite field is neither elementary abelian, nor a finite sum of Pr\"ufer $p$-groups for a fixed prime $p$.\end{proof}

\section{Almost linear minimal groups as valued groups}\label{Section 5}

Recall that a {\em valued abelian group} is an abelian group $G$ together with a surjective valuation $v:G\to\Gamma$, where $\Gamma$ is a linearly ordered set with maximum $\infty$, such that:\begin{enumerate}
\item $v(x)=\infty$ if and only if $x=0$.
\item $v(x-y)\ge\min\{v(x),v(y)\}$.
\end{enumerate}
Note that the axioms imply $v(-x)=v(x)$ and $v(x-y)=\min\{v(x),v(y)\}$ unless $v(x)=v(y)$.
It follows that for every $\gamma\in\Gamma\setminus\{\infty\}$ the sets $\bar B(\gamma)=\{x\in G:v(x)\ge\gamma\}$ and $B^o(\gamma)=\{x\in G:v(x)>\gamma\}$ are subgroups of $G$. Valued abelian groups have been studied by Simonetta \cite{sim} and de Aldama \cite{deald}, who consider the following conditions for all primes $p$:
\begin{enumerate}
\item[(3)] $\forall x,y\ [v(px)<v(py)\to v(x)<v(y)]$.
\item[(4)] $\forall x,y\ [v(x)<v(y)\to(v(px)<v(py)\lor px=0)]$.
\item[(5)] $\forall x,y\ [v(x)<v(py)\lor \exists z\,pz=x]$.
\end{enumerate}
As $v$ is surjective, for all $n\in \mathbb N^*$ one can define a function $f_n:\Gamma\to\Gamma$ as $f_n((v(x))=v(nx)$.  In addition, for every $m\in\mathbb N^*$ we consider the unary relation $R_m$ on $\Gamma$ given by
$$R_m(x)\Leftrightarrow |\bar B(x)/B^o(x)|>m.$$
It follows from the axioms (1)--(5) that $f_n$ is well-defined and increasing, strictly so on $\Gamma\setminus f_n^{-1}(\infty)$. Moreover, if $f_n(\gamma)\not=\infty$, then $R_m(\gamma)\Leftrightarrow R_m(f_n(\gamma))$.
We put
$$\mathcal L_{vg}=\{+,0,v,\le,\infty\}\quad\text{and}\quad\mathcal L_v=\{\le,R_n,f_n:n\in\mathbb N^*\}.$$

Simonetta shows that if $G$ is a valued abelian group satisfying (1)--(5), then there is at most one prime $p$ such that $G$ is not $p$-divisible, and at most one prime $q$ such that $G$ has $q$-torsion. Moreover, he obtains the following relative quantifier elimination result:
\begin{fact}\cite[Theorem 3.3]{sim}\label{QE} Every $\mathcal L_{vg}$-formula $\phi(\bar x,\bar y)$ with variables $\bar x$ in the group sort and variables $\bar y$ in the value sort is equivalent in $G$ to some formula $\phi_v(v(t_1(\bar x)),\ldots,v(t_n(\bar x)),\bar y)$, where the $t_i(\bar x)$ are group terms in $\bar x$ and $\phi_v$ is an $\mathcal L_v$-formula. Moreover, $\phi_v$ and $t_1,\ldots,t_n$ only depend on $p$ and $q$.\end{fact}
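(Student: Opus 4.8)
\medskip
\noindent\emph{Sketch of a proof of Fact~\ref{QE}.}
The plan is to prove the relative quantifier elimination by the standard reduction to eliminating a single innermost existential group-sort quantifier $\exists z$ from a conjunction of literals, and then to analyse such a conjunction using the ultrametric inequality together with the divisibility/torsion structure of $G$. First I would put every group term in $z,\bar x$ into the normal form $mz-t(\bar x)$ with $m\in\mathbb Z$ and $t$ a group term in $\bar x$; then each literal involving $z$ has, up to the data already present, one of the shapes: a group equation $mz=t(\bar x)$ or its negation; a valuation comparison $v(m_1z-t_1)\bowtie v(m_2z-t_2)$ with $\bowtie\in\{<,=\}$; or a mixed comparison $v(mz-t)\bowtie\delta$ with $\delta$ a value variable $y_j$ or a term $v(t'(\bar x))$. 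Literals not mentioning $z$ are carried along.

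Next I would reduce the integer coefficients to a common one. Using $v(nx)=f_n(v(x))$ and the fact (from axioms~(3)--(4)) that $f_n$ is strictly increasing off $f_n^{-1}(\infty)$ --- a set controlled by the torsion prime $q$ --- one splits into finitely many cases according to which subterms $m_iz-t_i$ are torsion (such a case collapses the corresponding literal into a group equation in $z$), and in the remaining cases substitutes $z'=Nz$ for a suitable $N$ depending only on the $m_i$. Every literal in $z$ then becomes a literal of coefficient $1$ in $z'$, subject to the side constraint $z'\in NG$. Membership in $nG$ is itself a value-sort condition: by axiom~(5), $w\in pG\iff v(w)\ge f_p(\beta)$ for some $\beta\in\Gamma$ (treating $\infty$ separately), and composite $n$ reduce to this since $G$ is $n'$-divisible for the $p$-free part $n'$ of $n$; so ``$z'\in NG$'' is expressed by an $\mathcal L_v$-formula in $v(z')$ built from the $f_n$. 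Group equations $mz=t(\bar x)$ are handled in the same spirit: either they have no solution (a value-sort condition on $v(t(\bar x))$), or the solution set is a coset of the finite group $G[m]$ and the quantifier becomes a finite disjunction which, after substituting a solution of valuation $f_m^{-1}(v(t))$, leaves only literals of the desired form.

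At this point the problem reads $\exists z\,\big[\,v(z)\in\Delta\ \wedge\ \bigwedge_i v(z-c_i(\bar x))\bowtie_i\delta_i\,\big]$ with $\Delta$ an $\mathcal L_v$-definable value set and the $\delta_i$ among $y_j$, $v(t'(\bar x))$ or other $v(z-c_j)$. Here I would invoke the ultrametric inequality: for any $z$, if $v(z-c_{i_0})$ is maximal among the $v(z-c_i)$ then $v(z-c_i)=\min\{v(z-c_{i_0}),v(c_{i_0}-c_i)\}$ whenever these differ, and is $\ge$ that value otherwise, the ``otherwise'' being decided by the residue of $z-c_{i_0}$ in $\bar B(\gamma)/B^o(\gamma)$ for $\gamma=v(z-c_{i_0})$. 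Thus $\exists z$ becomes a finite disjunction, over the choice of closest index $i_0$, of an existential over a \emph{single value-sort variable} $\gamma$ asserting: $\gamma\in\Delta'$ (the value set realised by $z\in c_{i_0}+NG$, again $\mathcal L_v$-definable via $f_N$); $\gamma\ge v(c_{i_0}-c_i)$ for $i\ne i_0$; that the induced values $\min\{\gamma,v(c_{i_0}-c_i)\}$ satisfy all the $\bowtie_i\delta_i$; and, whenever an equality literal forces a tie at ambient value $\gamma$, that $\bar B(\gamma)/B^o(\gamma)$ is large enough to contain a residue avoiding the finitely many forbidden ones --- which is precisely an instance of some $R_m(\gamma)$. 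Every clause here is an $\mathcal L_v$-formula in $\gamma$, the $y_j$ and the $v(c_i-c_j)=v(t(\bar x))$; existentially quantifying $\gamma$ stays within $\mathcal L_v$. Since the case splits, the coefficient $N$, the residue bounds $m$ and the invoked $f_n,R_n$ all depend only on $p$ and $q$, this would also yield the ``moreover'' clause.

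I expect the main obstacle to be the interface between these three ingredients. Reducing arbitrary integer coefficients to a single $N$ has to cope with $q$-torsion, where $f_n$ fails to be injective and one must track the finite torsion part, and with $p$-indivisibility, where $NG\neq G$ and this subgroup must be named in the value sort; at the same time the ultrametric bookkeeping of $z$ against the parameters must be matched with the residue quotients tightly enough that the family $\{R_m\}$ --- and nothing beyond it --- suffices to witness the equality literals. Keeping every case distinction uniform in the parameters, so that $\phi_v$ and the terms $t_i$ depend only on $p$ and $q$, is the delicate final point.
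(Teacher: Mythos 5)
First, note that the paper contains no proof of this statement: it is quoted as a Fact directly from Simonetta \cite[Theorem~3.3]{sim}, so the only meaningful comparison is with Simonetta's own argument. Your outline does follow what is essentially that standard route --- reduction to a single innermost group-sort existential over a conjunction of literals, normalisation of terms to $mz-t(\bar x)$, use of the functions $f_n$ and axioms (3)--(5) to reduce coefficients and to turn divisibility into value-sort conditions, the ultrametric ``closest centre'' analysis, and residue counting via the predicates $R_m$ --- so the skeleton is the right one.

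There are, however, concrete defects. Your handling of group equations $mz=t(\bar x)$ is wrong as written: in this setting $G[m]$ need \emph{not} be finite (Simonetta's axioms (1)--(5) allow infinite $q$-torsion, and the groups to which the paper applies Fact~\ref{QE} include infinite elementary abelian $p$-groups), so ``the quantifier becomes a finite disjunction'' over a coset of $G[m]$ fails; likewise ``substituting a solution of valuation $f_m^{-1}(v(t))$'' is unsound, because distinct solutions, though of equal valuation, interact differently with the remaining literals $v(z-c_i)$. The correct move is to observe that $mz=t$ is itself a valuation literal, $v(mz-t)=\infty$ (axiom (1)), and to carry it through the same machinery as the other comparisons. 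More importantly, the crux of the theorem --- showing that satisfiability of ``$v(z-c_{i_0})=\gamma$, $z\in c+NG$, and the residue of $z-c_{i_0}$ at level $\gamma$ avoids finitely many prescribed cosets'' is expressible using only $\le$, the $f_n$ and the $R_m$, uniformly so that $\phi_v$ and the $t_i$ depend only on $p$ and $q$ --- is exactly where the single-prime structure ($p$-indivisibility, $q$-torsion) and axioms (3)--(5) must do real work; you explicitly flag this as ``the main obstacle'' rather than carrying it out. So what you have is a reasonable outline of the known proof with one genuine error and the decisive step missing, not a proof; for the purposes of this paper the statement is in any case an imported result, and the relevant original contribution is the adaptation recorded in Proposition~\ref{QE2}, where the hypotheses hold only generically and the finitely many exceptions must be absorbed definably.
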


Clearly a valued group with infinite value set is almost linear, where we take the inverse order induced from the valuation. For minimal groups, we shall now prove the converse: An almost linear minimal group $G$ carries interdefinably the structure of a valued group.
Recall that by Remark \ref{Remadditional} we can and do choose a definable order on $G$  so that $G/\!\sim$ is ordered in type $\omega$.

\begin{lem}\label{subgroup} Let $G$ be an almost linear minimal group and put $H_g=\{x\in G:x\not>g\}$. Then $H_g$ is a subgroup for almost all $g\in G$. The collection of these subgroups is linearly ordered by inclusion in order type $\omega$.\end{lem}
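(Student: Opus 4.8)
The plan is to pass to the monster model $\bar G\succ G$ with generic type $p$ and to reduce the whole statement to Lemma~\ref{lincom}, which says that a generic element of $\bar G$ cannot be written as a $\mathbb Z$-combination of strictly $<$-smaller elements.

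I would start by pinning down $H_g$. By the conventions in force $\sim$ is a definable equivalence relation and $G/{\sim}$ is linearly ordered of type $\omega$; since each $\sim$-class is a definable subset of the minimal structure $G$ and there are infinitely many of them, each class is finite. Writing $C_0<C_1<\cdots$ for the classes and letting $g\in C_{n(g)}$, one sees that $H_g=C_0\cup\cdots\cup C_{n(g)}$; thus $H_g$ depends only on $n(g)$, and $H_g\subsetneq H_{g'}$ precisely when $n(g)<n(g')$. Hence the distinct sets among the $H_g$ already form a $\subseteq$-chain of order type $\omega$, and once we know that $H_g$ is a subgroup for all but finitely many values of $n(g)$, the subgroups among them form a cofinite subchain, still of order type $\omega$.

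For the substantive claim, observe that ``$H_g$ is a subgroup of $G$'' is a first-order condition $\sigma(g)$ (closure of $\{x:x\not>g\}$ under $0$, $-$ and $+$), so by minimality $\{g\in G:\sigma(g)\}$ is finite or cofinite, and it is cofinite exactly when $\sigma\in p$. So it is enough to verify $\sigma$ at some $g\models p|G$. For such $g$ we have $g>G$, and I would use the auxiliary remark that any element of $\bar G$ which is $>G$ is non-algebraic over $G$ (it is not in $\acl(G)=G$), hence generic over $G$. Now $0\in G$ gives $0<g$, so $0\not>g$. If $x\not>g$ while $-x>g$, then $-x>G$ is generic; from $x\not>g<-x$ one gets $x<-x$ (when $x\sim g$, this step uses that, $\sim$ being an equivalence relation, $<$ passes to a genuine order on $\bar G/{\sim}$), so $x=(-1)(-x)$ exhibits the generic $-x$ as a combination of a strictly smaller element, contradicting Lemma~\ref{lincom}. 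Likewise, if $x,y\not>g$ while $x+y>g$, then $x+y>G$ is generic and $x<x+y$, $y<x+y$, so $x+y$ is a sum of two strictly smaller elements, again contradicting Lemma~\ref{lincom}. Hence $\sigma\in p$, which gives the first assertion, and the chain assertion follows from the previous paragraph.

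The main obstacle is precisely the case $x\sim g$ in the closure arguments: to deduce $x<x+y$ (respectively $x<-x$) from $x\sim g$ together with $g<x+y$ (respectively $g<-x$) one needs incomparable elements to be coherently placed with respect to $<$ — equivalently, well-definedness of the order on $\bar G/{\sim}$ — and this is exactly where almost linearity (transitivity of $\sim$) is used; one should also check, as is automatic here, that the relevant elements ($x$, $-x$, $x+y$, $g$) are all generic over $G$, so that $\sim$-transitivity applies to them. The remaining points — finiteness of the $\sim$-classes, the equivalence ``cofinite in $G$'' $\Leftrightarrow$ ``belongs to $p$'', and the bookkeeping with class indices — are routine.
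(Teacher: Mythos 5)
Your proof is correct and takes essentially the same route as the paper's: check the subgroup condition at a generic $g$, use transitivity of $\sim$ to place $x,y$ strictly below $x\pm y$ and contradict Lemma \ref{lincom}, then pass to almost all $g\in G$ by minimality and read off the order type $\omega$ from the quotient $G/{\sim}$. The additional bookkeeping you include (well-definedness of the quotient order, finiteness of classes, the cofinite-iff-in-$p$ equivalence) is harmless and just makes explicit what the paper leaves implicit.
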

\begin{proof} Let $g$ be generic and $x,y\not> g$. Suppose for a contradiction that $x\pm y>g$. By transitivity of $\sim$, we have $x,y<x\pm y$, contradicting Lemma \ref{lincom}. Therefore $H_g$ is a subgroup for generic $g$, and thus for almost all $g$ by minimality. Clearly $g<g'$ implies $H_g \subset H_{g'}$, and $g\sim g'$ implies $H_g=H_{g'}$ by transitivity of $\sim$. Thus the set $\{H_g:g\in G\}$ has the same order type with respect to inclusion as $\{g/{\sim}:g\in G\}$ with respect to $<$, namely $\omega$.\end{proof}

\begin{prop}\label{valued} Let $G$ be an almost linear minimal group. After modifying the order on a finite set, the map $v:G\mapsto G/{\sim}$ endows $G$ with the structure of a valued abelian group, where $G/{\sim}$ is ordered by $g/{\sim}>g'/{\sim}$ if $g<g'$, of order type $\omega^*$. In particular, $G$ as almost linear group and $G$ as valued group are interdefinable (with parameters).\end{prop}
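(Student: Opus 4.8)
To show that $v\colon G\to G/{\sim}$ is a group valuation I will verify the two axioms of a valued abelian group, having first arranged (using Remark \ref{Remadditional}) that the order has type $\omega$ on $G/{\sim}$, so reversing it gives value set $\Gamma=G/{\sim}$ of type $\omega^*$ with a maximum $\infty$. I will adjoin $\infty=v(0)$ as a fresh top element; axiom (1) then holds by definition. The substance is axiom (2), that $v(x-y)\ge\min\{v(x),v(y)\}$, which in the original order reads: $x-y$ is $\leq$-below (i.e.\ $\not>$) whichever of $x,y$ is $\leq$-larger. After translating, this is exactly the statement that each $H_g=\{x\in G:x\not>g\}$ is closed under subtraction, i.e.\ is a subgroup — which is Lemma \ref{subgroup}, at least for almost all $g$. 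So the first real step is to pass from "almost all $g$" to "all $g$": I will modify the order on the finite exceptional set, pushing those finitely many elements to the appropriate place in the chain of subgroups (each exceptional $g$ has $H_g$ squeezed between two consecutive genuine subgroups, and I redefine $<$ so that $g$ lands in the right $\sim$-class), so that after the modification $H_g$ is a subgroup for every $g$ and the map $g\mapsto H_g$ is an order isomorphism from $(\Gamma,\le)$ onto the chain of subgroups. This modification is on a finite set, hence keeps everything definable and keeps $G$ minimal and almost linear.

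**Checking the axioms.** With $H_\gamma$ denoting the subgroup attached to $\gamma\in\Gamma$, I will observe $v(x)\ge\gamma\iff x\in H_\gamma$, i.e.\ $\bar B(\gamma)=H_\gamma$, so each closed ball is a subgroup; this immediately gives $v(-x)=v(x)$ (since $x\in H_\gamma\iff -x\in H_\gamma$) and the ultrametric inequality (2): if $\gamma=\min\{v(x),v(y)\}$ then $x,y\in H_\gamma$, hence $x-y\in H_\gamma$, hence $v(x-y)\ge\gamma$. Surjectivity of $v$ onto $\Gamma\setminus\{\infty\}$ is clear because every $\sim$-class is non-empty, and $0$ is the unique element with $v=\infty$ since $0$ is below everything (the exceptional-set modification must be done so as to keep $0$ at the bottom — this is harmless as $0$ is a single element). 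That completes the verification that $(G,v)$ is a valued abelian group with value set of order type $\omega^*$.

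**Interdefinability.** Finally I will argue the "in particular" clause. The order $<$ (in its modified form) is recovered from $v$ by $x<y\iff v(x)>v(y)$, and conversely the partition into $\sim$-classes together with its ordering is recovered from $<$ as $x\sim y\iff v(x)=v(y)$; so the almost-linear structure and the valued-group structure have the same definable sets over the same parameters, once we note that $\Gamma$ itself is interpreted in $(G,<)$ as $G/{\sim}$ with the induced order, and $v$ is the quotient map, all $\emptyset$-definable from $<$ and $+$. Hence the two structures are interdefinable with parameters (the parameters being those finitely many elements used to pin down the order modification).

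**Main obstacle.** The only place needing care is the passage from "$H_g$ a subgroup for almost all $g$" to a uniformly valued structure: I must choose the finite order-modification so that simultaneously (a) every $H_g$ becomes a subgroup, (b) the map $g\mapsto H_g$ stays an order-preserving bijection onto the chain of subgroups with $\Gamma$ of type $\omega^*$, and (c) $0$ remains the unique bottom element so that $v^{-1}(\infty)=\{0\}$. Concretely: for an exceptional $g$, the set $H_g$ need not be a subgroup, but the subgroup $\langle H_g\rangle$ it generates sits between two consecutive "good" subgroups $H_{g'}\subsetneq H_{g''}$ coming from generic elements; I redefine the order so that $g$ becomes $\sim$-equivalent to $g'$ (equivalently, declare $g\not>g'$ and $g>$ everything strictly below $H_{g'}$), which forces the recomputed $H_g$ to equal $H_{g'}$, a genuine subgroup. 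Doing this for all finitely many exceptional $g$ at once, and checking the recursion doesn't create new exceptions, is the technical heart; everything else is bookkeeping with the ultrametric axioms.
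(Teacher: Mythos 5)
Your overall architecture (reduce axiom (2) to ``every $H_g$ is a subgroup'' via Lemma \ref{subgroup}, isolate $0$ at the bottom to get axiom (1) and $v^{-1}(\infty)=\{0\}$, then get interdefinability because only finitely many elements were touched) is the same as the paper's, and your verification of the ultrametric axioms and the interdefinability discussion are fine. The genuine problem is precisely in what you call the technical heart: your concrete repair of the exceptional elements is wrong. If $g$ is exceptional and you move it down so that $g\sim g'$, the recomputed set $H_g=\{x:x\not>g\}$ is \emph{not} $H_{g'}$: it always contains $g$ itself (and every other element absorbed into that class), so it equals $H_{g'}\cup E$ for a nonempty finite set $E$ disjoint from $H_{g'}$, and there is no reason for this to be a subgroup --- the obstruction that made $H_g$ fail to be a subgroup (a sum of elements of $H_g$ landing in a higher class) is untouched by pushing $g$ downwards. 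Concretely, let $G=\bigoplus_{i<\omega}\mathbb{F}_2 e_i$ with the standard valuation by the largest nonzero coordinate (a minimal valued group), and perturb the order on a finite set so that the class $\{e_1,e_0+e_1\}$ is split, with $e_1$ strictly below $e_0+e_1$. Then $H_{e_1}=\{0,e_0,e_1\}$ is not a subgroup, and merging $e_1$ down into the class of $e_0$ leaves the $H$-set of the merged class equal to $\{0,e_0,e_1\}$, still not a subgroup. So the issue is not an unchecked recursion: the downward merge simply does not remove exceptions.

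The fix is to merge \emph{upwards}, and the paper does it in one stroke: first modify the order so that $0$ is the unique minimal element (this only enlarges the set of good elements and gives $H_0=\{0\}$); then choose $g_0$ minimal above all elements of the exceptional set (so $H_{g_0}$ is a genuine subgroup containing all of them) and make all of $H_{g_0}\setminus\{0\}$ into a single $\sim$-class lying above $\{0\}$ and below everything else. The $H$-set of that merged class is then exactly $H_{g_0}$, the $H$-sets of all higher classes are unchanged, only an initial segment of the order was altered (so $G/{\sim}$ keeps order type $\omega^*$ in the valuation order), and the interdefinability claim follows as you describe. With your downward merge replaced by this upward collapse, the rest of your argument goes through.
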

\begin{proof} Let $X\subseteq G$ be the co-finite set of $g\in G$ such that $H_g$ is a subgroup. We modify the order on $G$ so that $0$ is the unique minimal element. This can only increase $X$; in particular, $0\in X$ and $H_0=\{0\}$. Let $g_0$ be a minimal element greater than $G\setminus X$. We modify the order by making all elements of the finite set $H_{g_0}\setminus\{0\}$ incomparable. These modifications are clearly definable, the modified order on $G$ is still almost linear, and $H_g$ is now a subgroup of $G$ for all $g\in G$. Of course, $g<g'$ still implies $H_g \subset H_{g'}$, and $g\sim g'$ yields $H_g=H_{g'}$.

Now, $H_0=\{0\}$ implies axiom (1); the fact that $H_g$ is a subgroup for all $g\in G$ yields axiom (2). As we have only modified an initial segment of the order on $G$, the set of equivalence classes $G/{\sim}$ still has order type $\omega^*$. We have obtained the valuation $v$ definably from the almost linear structure; inversely, as we only modified the order on finitely many elements, we can define the original almost linear structure from the valuation.\end{proof}

We say that Axioms (3)-(5) hold {\em generically} if they hold outside a finite set. It is clear that if Axiom (3) holds outside a finite set, then each function $f_n$ is well-defined outside a finite set $D_n$; we extend $f_n$ to the whole of $\Gamma$ by putting $f_n(D_n)=\infty$.

Let $G$ be a minimal valued group. Since $B^o(v(g))$ is a proper definable subgroup of $G$ and thus finite for all $g\in G$, it follows that $\Gamma$ has order type $\omega^*$ or is finite. In the latter case, if $\gamma\in\Gamma$ is the minimal element, then $\bar B(\gamma)=G$, so the valuation is determined by the restriction of $v$ to the finite group $B^o(\gamma)$ and hence definable in the pure group structure. Henceforth, we shall assume that $\Gamma$ has order type $\omega^*$, so $G$ is almost linear; by Theorem \ref{torsion} it is either elementary abelian of exponent $p$ or a finite product of Pr\"ufer $p$-groups, for some prime $p$. Note that for $n$ coprime to $p$ the function $f_n$ is just the identity as all groups $\bar B$ and $B^o$ are finite $p$-groups; this also shows that $v$ has finite fibres. Clearly, if $G$ has exponent $p$, then $f_p$ maps $\Gamma$ to $\infty$. We write $g\sim g'$ if $v(g)=v(g')$.

\begin{lem}\label{bounded} If $G$ is divisible and $g$ generic, then $\bar B(v(g))/\bar B(v(pg))$ is finite and isomorphic to $\bar B(v(pg))/\bar B(v(p^2g))$ as valued groups (with the induced valuation), via the map induced by $x\mapsto px$. In particular, the interval $[v(g),v(pg)]$ is finite and $\mathcal L_v$-isomorphic to $[v(pg),v(p^2g)]$. Hence for all but finitely many $\gamma$ the function $f_p$ is well-defined and corresponds to a right shift by $\ell$, where $[v(g),v(pg)]$ has length $\ell+1$. It is thus definable from the order. \end{lem}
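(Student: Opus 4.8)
The plan is to pass to a monster model $\bar G\succ G$, fix a generic $g\in\bar G$, and study the three nested subgroups $B_i:=\bar B(v(p^ig))$ for $i=0,1,2$. The preliminaries are: since $G$ is divisible it is, by Theorem~\ref{torsion}, a finite sum of $k$ Pr\"ufer $p$-groups, so multiplication by $p$, call it $\mu_p\colon\bar G\to\bar G$, is surjective with $\ker\mu_p=\bar G[p]=G[p]$ finite of size $p^k$; the elements $g,pg,p^2g$ are all generic over $G$ (each is interalgebraic with $g$ over $\emptyset$ through the finite fibres of $\mu_p$, hence non-algebraic over $G$); since $g,pg,p^2g$ lie below all of $G$ in the chosen order we have $G\subseteq B_2\subseteq B_1\subseteq B_0$ (using Corollary~\ref{newcor}, which gives $v(p^ig)\preceq v(p^{i+1}g)$), so in particular $\bar G[p]\subseteq B_2$; and each $B_i$ is a subgroup by the valued-group axioms. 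Finally there is $\sigma\in\mathrm{Aut}(\bar G/G)$ with $\sigma(g)=pg$ (because $pg$ realizes $\tp(g/G)$), hence $\sigma(B_i)=B_{i+1}$; being an automorphism, $\sigma$ induces an $\mathcal L_v$-automorphism of $\bar\Gamma$ carrying $v(g)\mapsto v(pg)\mapsto v(p^2g)$, which already yields the $\mathcal L_v$-isomorphism $[v(g),v(pg)]\cong[v(pg),v(p^2g)]$.

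The heart of the matter is the claim $\mu_p(B_0)=B_1$ and $\mu_p(B_1)=B_2$; granting it, the rest is bookkeeping. Then $\mu_p$ restricts to a surjection $B_0\twoheadrightarrow B_1$ with kernel $\bar G[p]$, so $B_0/\bar G[p]\cong B_1$ and, since $\bar G[p]\subseteq B_1\subseteq B_0$, the correspondence theorem gives $[B_0:B_1]=[B_1:\mu_p(B_1)]=[B_1:B_2]$; more usefully $[B_0:B_1]=[B_0:\mu_p(B_0)]=[B_0:pB_0]$, and this last quantity is uniformly bounded: in $G$ every $\bar B(v(a))$ is a finite subgroup of a sum of $k$ Pr\"ufer $p$-groups, hence at most $k$-generated, so $[\bar B(v(a)):p\bar B(v(a))]\le p^k$; transferring the first-order sentence ``$\forall x\ [\bar B(v(x)):p\bar B(v(x))]\le p^{k}$'' from $G$ to $\bar G$ bounds $[B_0:B_1]\le p^k$. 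Since the subgroups of the valuation chain strictly between $B_1$ and $B_0$ correspond bijectively to the values strictly between $v(g)$ and $v(pg)$, and each step in this finite chain has index $\ge 2$, the interval $[v(g),v(pg)]$ is finite. Moreover $x\mapsto px$ induces a valued-group isomorphism $B_0/B_1\to B_1/B_2$: it is well-defined and surjective because $\mu_p(B_1)=B_2$ and $\mu_p(B_0)=B_1$, and injective because $px\in B_2=\mu_p(B_1)$ forces $x\in B_1+\bar G[p]=B_1$.

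For the last assertion, set $\ell+1:=|[v(g),v(pg)]|$. The sentence ``$|[v(x),v(px)]|=\ell+1$'' belongs to the generic type, hence holds of all but finitely many $x\in G$ by minimality; consequently only finitely many values $\gamma$ are ``bad'', i.e.\ admit an $x$ with $v(x)=\gamma$ but $|[v(x),v(px)]|\ne\ell+1$. At every good $\gamma$, all $x$ with $v(x)=\gamma$ produce the same interval $[\gamma,v(px)]$ of length $\ell+1$, so $f_p(\gamma)$ is well-defined and equals the $\ell$-th $\preceq$-successor of $\gamma$; and in the discretely ordered $\Gamma$ the successor function, hence the $\ell$-fold successor for the fixed $\ell$, is definable from $\le$.

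The step I expect to be genuinely delicate is $\mu_p(B_0)=B_1$. The inclusion $\mu_p(B_0)\subseteq B_1$ is a monotonicity statement, ``$v(x)\succeq v(g)\Rightarrow v(px)\succeq v(pg)$''; this can fail at finitely many values of a minimal valued group (those near the torsion), so the argument must use genuinely that $g$ is generic — one route is to show that if it failed for $g$ then, after transfer, it would fail cofinitely in $G$ and contradict minimality (for instance via the definable set $\{x:x\sim px\}$, or $\{x:v(px)\succ v(x)\}$, being forced to be finite or cofinite), another is to exploit directly that $B_0$ is a subgroup together with Corollary~\ref{newcor} ($v(px)\succeq v(x)$ for generic $x$) and Lemma~\ref{n-1sim} ($\tfrac1py$ is a single $\sim$-class). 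The reverse inclusion $\mu_p(B_0)\supseteq B_1$ says every $y$ with $v(y)\succeq v(pg)$ has a $p$-th root of value $\succeq v(g)$; since by Lemma~\ref{n-1sim} all such roots share a value, and the $p$-th roots of $pg$ itself all have value exactly $v(g)$, this too is an instance of ``$f_p$ is well-behaved at generic levels''.
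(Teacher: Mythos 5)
Your overall architecture is sound, and two of your shortcuts are genuinely nice: the automorphism $\sigma\in\mathrm{Aut}(\bar G/G)$ with $\sigma(g)=pg$ (hence $\sigma(pg)=p^2g$) does give the $\mathcal L_v$-isomorphism of the two intervals for free, and bounding $[B_0:pB_0]\le p^k$ by transferring a first-order sentence from $G$ is a clean alternative to the paper's exact computation $|\bar B(v(h))/\bar B(v(ph))|=|G[p]|$ for $h$ beyond a fixed $g_0\in G$. But there is a genuine gap exactly where you flag one: the claim $\mu_p(B_0)=B_1$ (equivalently, that multiplication by $p$ respects the valuation at generic levels) is never proved, and it is not bookkeeping --- it is the entire content of the lemma, since the finiteness, the induced isomorphism of quotients, and the shift description of $f_p$ are all downstream of it. The strategies you gesture at do not suffice as stated: Corollary \ref{newcor} and minimality give that $\{x: v(px)\ge v(x)\}$ is co-finite, but that is a statement about each $x$ separately and says nothing about whether two elements of the \emph{same} generic value have images of the same value, which is what the inclusion $\mu_p(B_0)\subseteq B_1$ (applied level by level) actually requires; and your sketch of the reverse inclusion via Lemma \ref{n-1sim} presupposes the very well-definedness of $f_p$ at generic levels that is to be established.

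For comparison, the paper's argument for this step proves the two implications (i) $g\sim g'\Rightarrow pg\sim pg'$ and (ii) $pg\sim pg'\Rightarrow g\sim g'$ for generic $g,g'$, each by the same device: the relevant finite set ($X=\{pg':g'\sim g\}$ for (i), $X=\{g':pg'\sim pg\}$ for (ii)) has a minimal element $g_0$ and a maximal element $g_1$, both generic over $G$; using Lemma \ref{n-1sim} one sees that $X$ is definable by the same formula from $g_0$ alone and from $g_1$ alone, so an automorphism over $G$ carrying $g_0$ to $g_1$ fixes $X$ setwise, forcing $g_0\sim g_1$ and hence $X$ to be a single $\sim$-class. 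Together with divisibility, (i) and (ii) show that $x\mapsto px$ maps the fibre $v^{-1}(v(g))$ onto $v^{-1}(v(pg))$; since every element of $B^o(v(g))$ is a difference of two elements of value exactly $v(g)$, that fibre generates $\bar B(v(g))$, so $\mu_p(B_0)=B_1$, and iterating yields everything you assumed. You need to supply this (or an equivalent) argument; without it the proof does not go through.
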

\begin{proof} Let us check first that if $g$ is generic and $g\sim g'$, then $pg\sim pg'$. So consider $X:=\{pg':g'\sim g\}$; it must have a minimal element $g_0$ and a maximal element $g_1$, both generic over $G$. By Lemma \ref{n-1sim}, all elements of $\frac1p g_0$ and $\frac1p g_1$ are $\sim$-related to $g$, so
$$X=\{pg':g'\sim{\textstyle\frac1p} g_0\}=\{pg':g'\sim{\textstyle\frac1p} g_1\}$$
is invariant under an automorphism taking $g_0$ to $g_1$. Hence $g_0\sim g_1$.

Next, let us check that $pg\sim pg'$ implies $g\sim g'$ for generic $g$.
So consider a minimal element $g_0$ and a maximal element $g_1$ of the set $X:=\{g':pg'\sim pg\}$. Since
$$X=\{g':pg'\sim pg_0\}=\{g':pg'\sim pg_1\}$$
and the generic type is unique, we have $g_0\sim g_1$.

It follows that $x\mapsto px$ maps $\bar B(v(g))\setminus B^o(v(g))$ onto $\bar B(v(pg))\setminus B^o(v(pg))$. As it is a group homomorphism, it also maps $\bar B(v(g))$ onto $\bar B(v(pg))$, and hence $B^o(v(g))$ onto $B^o(v(pg))$. Thus $g'<g$ if and only if $pg'<pg$ for all $g'$. We conclude that the map induced by $x\mapsto px$ is an isomorphism  from $\bar B(v(g))/\bar B(v(pg))$ to $\bar B(v(pg))/\bar B(v(p^2g))$ as valued groups, and that $f_p$ yields an $\mathcal L_v$-isomorphism between $[v(g),v(pg)]$ and $[v(pg),v(p^2g)]$. By minimality, all of this holds for all $g$ outside some finite set $Y$.

It remains to show that $\bar B(v(g))/\bar B(v(pg))$ is finite.
Let $g_0\in G$ be such that $G[p]\cup Y\subseteq\bar B(v(g_0))$ (where $G[p]:=\{ x \in G: px=0\}$). Then for all $h>g_0$ the map $x\mapsto px$ maps the finite group $\bar B(v(h))$ onto the subgroup $\bar B(v(ph))$ and has finite kernel $G[p]$ independent of $h$. So $|\bar B(v(h))/\bar B(v(ph))|=|G[p]|$ for all $h>g_0$.\end{proof}

An inspection of Simonetta's proof of Fact \ref{QE} shows that it yields the following proposition, as finite sets of exceptions can be dealt with definably.
\begin{prop}\label{QE2} Let $G$ be a valued abelian group whose valuation $v$ has finite fibres and whose value set $\Gamma$ has order type $\omega^*$.
Assume that $G$ is either elementary abelian of exponent $p$ or a finite product of Pr\"ufer $p$-groups for some prime $p$, and Axioms (3) and (4) hold generically, i.e.\ outside a finite set. Then every $\mathcal L_{vg}$-formula $\phi(\bar x,\bar y)$ with variables $\bar x$ in the group sort and variables $\bar y$ in the value sort is equivalent to a formula $\phi_v(v(t_1(\bar x)),\ldots,v(t_n(\bar x)),\bar y)$, where the $t_i(\bar x)$ are terms in $\bar x$ and $\phi_v$ is an $\mathcal L_v$-formula (both with parameters).\end{prop}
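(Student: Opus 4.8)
The plan is to reduce to Simonetta's relative quantifier elimination, Fact~\ref{QE}, by first checking that Axiom~(5) holds outright in our setting and then absorbing into the parameters the finitely many values of $\Gamma$ at which Axioms~(3) and~(4), or the conclusions of Lemma~\ref{bounded}, may fail. For Axiom~(5): if $G$ is elementary abelian of exponent $p$, then for a prime $n\ne p$ multiplication by $n$ is an automorphism of $G$, so $\exists z\,(nz=x)$ holds for all $x$, while for $n=p$ we have $py=0$, hence $v(py)=\infty>v(x)$ unless $x=0$, and $\exists z\,(pz=x)$ holds when $x=0$; if $G$ is a finite product of Pr\"ufer $p$-groups, it is divisible, so $\exists z\,(nz=x)$ holds for every prime $n$ and every $x$. (In the elementary abelian case Axioms~(3),(4) also hold everywhere, since then $v(px)=v(py)=\infty$, so the generic hypothesis does genuine work only in the Pr\"ufer case.) Now we fix a finite $D\subseteq\Gamma$ outside which Axioms~(3),(4) hold, enlarged so as to contain also $\bigcup_n D_n$ and the finitely many values excluded in Lemma~\ref{bounded}. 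Since $v$ has finite fibres, $E:=v^{-1}(D)$ is a finite subset of $G$; we add the elements of $E$ to the language as constants. On the cofinite set $\Gamma\setminus D$ the functions $f_n$ are then genuine and strictly increasing ($f_n=\mathrm{id}$ whenever $p\nmid n$, and $f_p$ is a definable shift by Lemma~\ref{bounded}), one has $G[p]\subseteq\bar B(\gamma)$ for $\gamma\notin D$, and all of Simonetta's hypotheses (1)--(5) are satisfied.

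We then run Simonetta's proof of Fact~\ref{QE} in this expanded structure. That proof proceeds, as usual for a relative quantifier elimination, by eliminating a single existential group quantifier: given a partial embedding respecting the value-sort data --- the values $v(t(\bar x))$ of group terms and their $\mathcal L_v$-type in $\Gamma$ --- and a new group element $a$, one shows that the quantifier-free type of $a$ over the current finitely generated subgroup $H$ is pinned down by finitely many of the values $v(a-h)$ ($h\in H$) together with the residue and coset data recorded by the relations $R_m$, and that this data is transported correctly under multiplication by integers via the $f_n$. Axioms~(3) and~(4) are invoked precisely in this transport, and only at the values $v(a-h)$ and their $f_n$-images. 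Since the elements of $E=v^{-1}(D)$ are now named constants, we may in the elimination step assume $a\notin E$ and $a-h\notin E$ for the relevant $h$ (otherwise $a$, resp.\ $a-h$, equals a constant and the step is trivial); then all the values invoked lie in $\Gamma\setminus D$, where (3)--(5) and Lemma~\ref{bounded} hold, so Simonetta's argument applies without change. It follows that every $\mathcal L_{vg}$-formula $\phi(\bar x,\bar y)$ is equivalent, in the expansion by $E$, to a Boolean combination of formulas $x_i=e$ ($e\in E$) and of formulas $\phi_v(v(t_1(\bar x)),\dots,v(t_n(\bar x)),\bar y)$ with $\phi_v\in\mathcal L_v$; as $x_i=e$ is equivalent to $v(x_i-e)=\infty$ and $\{\infty\}$ is $\mathcal L_v$-definable, the former are already of the required shape with parameter $e$, and collecting the constants of $E$ among the parameters gives the proposition.

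The main obstacle is the phrase ``applies without change'': one must inspect Simonetta's proof of \cite[Theorem~3.3]{sim} step by step and verify (a) that, once $E$ has been named, no appeal to Axiom~(3) or~(4) is ever made at a value lying in $D$, and (b) that extending each $f_n$ to all of $\Gamma$ by $f_n(D_n)=\infty$ does not disrupt the residue and coset bookkeeping --- it does not, because $f_n^{-1}(\infty)$ consists of the named elements of $E$ together with the genuine $n$-torsion of $G$, which is finite and hence contained in $\bar B(\gamma)$ for every $\gamma\notin D$. This inspection, though routine, is where the real content of the proposition lies; everything else is the parameter reduction above.
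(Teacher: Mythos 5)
Your proposal is correct and follows essentially the same route as the paper, which justifies the proposition in a single sentence by noting that an inspection of Simonetta's proof of Fact~\ref{QE} goes through once the finitely many exceptions are handled definably; your write-up simply fleshes out that remark (checking Axiom~(5), naming the finite preimage $v^{-1}(D)$ as parameters, and coding $x=e$ via $v(x-e)=\infty$), and like the paper it ultimately defers to a step-by-step inspection of \cite[Theorem~3.3]{sim}.
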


Notice that by Lemma \ref{bounded}, the assumptions of Proposition \ref{QE2} are satisfied for any valued minimal group with infinite value set $\Gamma$.

\begin{thm} A valued abelian group $G$ with infinite value set $\Gamma$ is minimal if and only if the induced $\mathcal L_v$-theory on $\Gamma$ is minimal of order type $\omega^*$, the map $v$ has finite fibres, and either $G$ is elementary abelian, or a finite product of Pr\"ufer $p$-groups for some prime $p$ and $f_p$ is eventually a well-defined $\mathcal L_v$-isomorphism acting by right shift.\end{thm}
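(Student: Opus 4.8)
The plan is to prove the two implications separately: the forward one by collecting the facts already established and adding a short interpretability argument, and the (harder) converse by feeding the relative quantifier elimination of Proposition~\ref{QE2} one group variable at a time.

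For the left-to-right implication, suppose $G$ is minimal. The discussion preceding the theorem already supplies most of the conclusion: $B^o(\gamma)$ is a proper definable subgroup, hence finite, for every $\gamma\neq\infty$, so an infinite $\Gamma$ must have order type $\omega^*$; each fibre $v^{-1}(\gamma)$ lies in $\bar B(\gamma)$ and is therefore finite; by Theorem~\ref{torsion} $G$ is elementary abelian of exponent $p$ or a finite product of Pr\"ufer $p$-groups; and in the divisible case Lemma~\ref{bounded} gives that $f_p$ is, outside a finite set, a well-defined $\mathcal L_v$-isomorphism acting by a right shift. What is left is the minimality of $(\Gamma,\mathcal L_v)$. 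For this I would note that $\Gamma$ equipped with the order $\le$, the relations $R_m(\gamma)\Leftrightarrow|\bar B(\gamma)/B^o(\gamma)|>m$, and the functions $f_n:v(x)\mapsto v(nx)$ is interpretable in $G$; hence for any $\mathcal L_v$-definable $S\subseteq\Gamma$ the preimage $v^{-1}(S)$ is definable in $G$, so finite or co-finite, and since $v$ is surjective with finite fibres this forces $S$ itself to be finite or co-finite.

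For the converse, suppose $(\Gamma,\mathcal L_v)$ is minimal of order type $\omega^*$, that $v$ has finite fibres, and that $G$ falls under the stated dichotomy. First I would check the hypotheses of Proposition~\ref{QE2}, i.e.\ that Axioms~(3)--(4) hold outside a finite set: in the elementary abelian case $px=0$ identically, so both axioms are vacuous; in the Pr\"ufer case they follow from $f_p$ being eventually well-defined (Axiom~(3)) and eventually strictly increasing (Axiom~(4)), both being part of the assumed ``eventually a well-defined $\mathcal L_v$-isomorphism acting by right shift''. Now let $D\subseteq G$ be definable in one group variable $x$. By Proposition~\ref{QE2} it is defined by $\phi_v(v(t_1(x)),\dots,v(t_k(x)))$ with group terms $t_i$; in an abelian group $t_i(x)=m_ix$ with $m_i\in\mathbb Z$. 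Writing $m_i=p^{e_i}m_i'$ with $p\nmid m_i'$ and using $f_{m_i'}=\mathrm{id}$, one gets $v(m_ix)=f_p^{e_i}(v(x))$, which off a finite set of $x$ is a fixed iterated right shift, an $\mathcal L_v$-definable (indeed $\le$-definable) function of $v(x)$; in the exponent-$p$ case $v(m_ix)$ is $\infty$ when $p\mid m_i$ and $v(x)$ otherwise. Hence there is an $\mathcal L_v$-formula $\psi_v$ with $D$ equal, up to a finite symmetric difference, to $v^{-1}(\{\gamma:\psi_v(\gamma)\})$. Minimality of $\Gamma$ makes this last set finite or co-finite in $\Gamma$, and finiteness of the fibres of the surjection $v$ propagates this to $D$. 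Thus $G$ is minimal.

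I expect the main obstacle to be the converse, and within it the handling of the finitely many exceptional values at which $f_p$ is not yet the pure shift: one must verify that these can genuinely be absorbed into the finite symmetric difference (they contribute only a finite, hence irrelevant, set of $x$), and that after this absorption the resulting condition on $v(x)$ is an honest $\mathcal L_v$-formula --- i.e.\ that ``$\ell$-fold successor in $\omega^*$'' is $\le$-definable --- rather than something still secretly referring to the group operation. The remainder is book-keeping: applying Proposition~\ref{QE2} with parameters in a single group variable, and unwinding abelian group terms into integer multiples.
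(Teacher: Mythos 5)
Your overall route is the same as the paper's: forward by pulling $\mathcal L_v$-definable subsets of $\Gamma$ back along the surjection $v$ and quoting Theorem \ref{T3} and Lemma \ref{bounded}; converse by applying Proposition \ref{QE2} to a formula in one group variable and translating it into an $\mathcal L_v$-condition on $v(x)$, then using minimality of $\Gamma$ and finiteness of the fibres. The forward half is fine as you state it.

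In the converse, however, there is a genuine omission: you write the group terms as $t_i(x)=m_ix$, but the set $D$ is defined with parameters $\bar g$, and Proposition \ref{QE2} produces terms \emph{with parameters}, i.e.\ $t_i(x,\bar g)=n_ix+h_i$ with $h_i\in\langle\bar g\rangle$ (already the formula $x=g$ forces such constants, since it must be rendered as $v(x-g)=\infty$). Your identity $v(m_ix)=f_p^{e_i}(v(x))$ therefore does not apply to the terms as given; you must first dispose of the constants $h_i$. This is not mere book-keeping: one has to show that the set $\{x\in G: v(n_ix)\ge v(h_i)\text{ for some }i\text{ with }n_i\neq 0\}$ is finite --- which uses that $\bar B(v(h_i))$ is finite (order type $\omega^*$ plus finite fibres) and that multiplication by $n_i$ has finite kernel (exponent $p$, or finite $n$-torsion in the Pr\"ufer case) --- and then invoke the ultrametric equality $v(n_ix+h_i)=\min\{v(n_ix),v(h_i)\}=v(n_ix)$ off that finite set. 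This is exactly the finite exceptional set $X$ in the paper's proof, and without it your ``up to finite symmetric difference'' claim is unjustified. Once this is added, your argument closes. A separate, minor point: your worry about whether the $\ell$-fold shift is $\le$-definable is a red herring; the functions $f_n$ are symbols of $\mathcal L_v$, and the minimality hypothesis on $\Gamma$ is as an $\mathcal L_v$-structure, so the final formula may simply use $f_{n_i}$ (as the paper's $\phi'_v$ does), with the finitely many values where $f_{n_i}$ misbehaves absorbed into the finite exceptional set via finiteness of the fibres.
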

\begin{proof} If $G$ is minimal as a valued group, then $\Gamma$ with the induced structure must be minimal, since an infinite co-infinite subset $X$ of $\Gamma$ has an infinite co-infinite pre-image $v^{-1}(X)$. We have seen above that $\Gamma$ has order type $\omega^*$, the group is either elementary abelian or a finite product of Pr\"ufer $p$-groups, and $v$ has finite fibres. The fact that  $f_p$ is eventually an $\mathcal L_v$-isomorphism acting by right shift follows from Lemma \ref{bounded}.

Conversely, suppose that the $\mathcal L_v$-structure $\Gamma$ is minimal of order type $\omega^*$, all fibres of $v$ are finite, and $G$ is elementary abelian of exponent $p$, or a finite product of Pr\"ufer $p$-groups and $f_p$ is eventually an $\mathcal L_v$-isomorphism acting by right shift. This implies that $f_n$ is the identity for $n$ coprime to $p$, and $f_p(\Gamma)=\infty$ if $G$ has exponent $p$. All of this implies that Axioms (3)-(5) hold generically.

Consider a formula $\phi(x,\bar g)$, where $\bar g$ are parameters in $G$. (Clearly, we can replace any parameter $\gamma\in\Gamma$ by some element of $v^{-1}(\gamma)$.)
By Proposition \ref{QE2} (enlarging the tuple $\bar g$ of parameters if necessary), this formula is equivalent to a formula $\phi_v(v(t_1(x,\bar g)),\ldots,v(t_k(x,\bar g)))$, where $\phi_v$ is an $\mathcal L_v$-formula and $t_1,\ldots,t_k$ are group terms.
As group terms are just $\mathbb Z$-linear combinations, there are integers $n_i\in\mathbb Z$ and $h_i\in\langle g_1,\ldots,g_k\rangle< G$ such that $t_i(x,\bar g)=n_ix+h_i$; if $G$ has exponent $p$, we may choose $0\le n_i<p$. Since $v$ has finite fibres, $\Gamma$ has order type $\omega^*$ and $G$ is elementary abelian or has finite $n$-torsion for all $n$, the set
$$X=\{g\in G: v(n_ig)\ge v(h_i)\text{ for some $i$ with }n_i\not=0\}$$
is finite. Let $Y$ be a finite subset of $G$ such that all the $f_{n_i}$ for $n_i>0$ are well-defined outside $v(Y)$. On $G\setminus (X \cup Y)$ the formula $\phi_v(v(t_1(x,\bar g)),\ldots,v(t_k(x,\bar g)))$ is equivalent to
$$\phi_v(f_{n_1}(v(x)),\ldots,f_{n_k}(v(x)))=\phi'_v(v(x),\bar\gamma),$$
where we have put $f_{n_i}(v(x))=v(h_i)=\gamma_i\in\Gamma$ whenever $n_i=0$. Since $\phi'_v(y,\bar\gamma)$ defines a finite or co-finite subset of $\Gamma$ and the fibres of $v$ are finite, $\phi'_v(v(x),\bar\gamma)$ defines a finite or co-finite subset of $G$. It follows that $\phi(x,\bar g)$ defines a finite or co-finite subset of $G$. Thus $G$ is minimal as a valued group.\end{proof}

This yields a classification of valued minimal groups:
\begin{thm} A valued group $G$ with infinite value set $\Gamma$ is minimal if and only if\begin{enumerate}
\item $\Gamma$ has order type $\omega^*$,
\item $v$ has finite fibres,
\item either $G$ is elementary abelian of exponent $p$ and
\begin{enumerate}
\item either there is $n_0<\omega$ such that $R_{p^{n_0}}\land\neg R_{p^{n_0+1}}$ is co-finite,
\item or $R_{p^n}\land\neg R_{p^{n+1}}$ is finite for all $n<\omega$,
\end{enumerate}
\item or $G$ is a finite product of Pr\"ufer $p$-groups, $f_p$ is eventually an $\mathcal L_v$-isomor\-phism acting by right shift, and there is $n_0<\omega$ such that $R_{p^{n_0}}\land\neg R_{p^{n_0+1}}$ almost everywhere.\end{enumerate}\end{thm}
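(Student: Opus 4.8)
The plan is to deduce the statement from the preceding theorem, which already shows that $G$ is minimal if and only if $\Gamma$ has order type $\omega^{*}$, $v$ has finite fibres, the stated group-theoretic condition holds, and the $\mathcal L_{v}$-structure on $\Gamma$ is minimal. Since conditions $(1)$, $(2)$ and the group parts of $(3)$, $(4)$ are precisely these, it remains to prove: \emph{assuming $\Gamma$ has order type $\omega^{*}$ and the group condition holds, the $\mathcal L_{v}$-structure on $\Gamma$ is minimal iff $(3)(a)$ or $(3)(b)$ holds in the elementary abelian case and the $R$-part of $(4)$ holds in the Pr\"ufer case.} First I would fix notation: write $h(\gamma)$ for the exponent of the finite $p$-group $\bar B(\gamma)/B^{o}(\gamma)$, so $h(\gamma)\ge 1$ by surjectivity of $v$ and $R_{p^{n}}(\gamma)\Leftrightarrow h(\gamma)>n$; and record the language reductions $R_{m}=R_{p^{\lfloor\log_{p}m\rfloor}}$ and $f_{n}=f_{p}^{\,v_{p}(n)}$ (multiplication by an integer prime to $p$ is a bijection of each finite $p$-group $\bar B,B^{o}$), so that only the $R_{p^{k}}$ and $f_{p}$ matter.

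For "$\Gamma$ minimal $\Rightarrow$ the $R$-condition": each $R_{p^{n}}(\Gamma)$ is $\mathcal L_{v}$-definable, hence finite or cofinite, and $R_{p^{n+1}}(\Gamma)\subseteq R_{p^{n}}(\Gamma)$, so there is a threshold $n_{0}\in\mathbb N\cup\{\infty\}$ with $R_{p^{n}}(\Gamma)$ cofinite for $n<n_{0}$ and finite for $n\ge n_{0}$. If $n_{0}<\infty$ then $\{\gamma:h(\gamma)=n_{0}\}$ equals a cofinite set minus a finite one, so it is cofinite, which is $(3)(a)$ respectively the $R$-part of $(4)$. If $n_{0}=\infty$ then $\{\gamma:h(\gamma)\le N\}$ is finite for every $N$, which in the elementary abelian case is $(3)(b)$; in the Pr\"ufer case I would exclude this, using that Axioms $(3)$--$(5)$ hold off a finite set so $h(\gamma)=h(f_{p}(\gamma))$ off a finite set, while $f_{p}$ is eventually the shift $\gamma\mapsto\gamma+\ell$ with $\ell\ge 1$ (part of the group condition, cf.\ Lemma \ref{bounded}), whence $h$ is eventually constant along each arithmetic progression of step $\ell$, contradicting $h\to\infty$. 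So there $n_{0}<\infty$ and $(4)$ holds.

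For the converse, given an $\mathcal L_{v}$-formula $\psi(x;\bar\gamma)$, only finitely many $R_{p^{k}}$ and the single function $f_{p}$ (through its powers) occur. Using the $R$-condition and Lemma \ref{bounded}, I would pass to a cofinite $\Gamma_{0}\subseteq\Gamma$ on which each occurring $R_{p^{k}}$ is constant --- in case $(3)(b)$ each of the finitely many such $R_{p^{k}}(\Gamma)$ is cofinite; in cases $(3)(a)$, $(4)$ the function $h$ is eventually constant --- and on which $f_{p}$ is the shift by $\ell$ (Pr\"ufer) or the constant $\infty$ (elementary abelian), so that each term $f_{p}^{j}(x)$ is there an order-definable shift of $x$ or is identically $\infty$. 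Then, handling the quantifiers of $\psi$ by splitting off the finitely many named points of $\Gamma\setminus\Gamma_{0}$ (and $\infty$) as finite disjunctions and inducting on complexity, $\psi$ reduces, up to a finite modification of the set it defines, to a formula definable with parameters in the pure discrete linear order of type $\omega^{*}$, which is minimal; hence $\psi(\Gamma;\bar\gamma)$ is finite or cofinite, $\Gamma$ is $\mathcal L_{v}$-minimal, and $G$ is minimal by the preceding theorem.

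The main obstacle will be making the last step precise: showing that after deleting the finitely many exceptional points (where $f_{p}$ is not the shift, or the relevant $R_{p^{k}}$ are not yet constant) the induced $\mathcal L_{v}$-structure is, interdefinably with parameters, the pure discrete order of type $\omega^{*}$ augmented by finitely many shift functions and constants. The cleanest route is probably to record a relative quantifier elimination for $(\Gamma,\mathcal L_{v})$ in the spirit of Simonetta's Fact \ref{QE}, reducing each one-variable $\mathcal L_{v}$-formula to a Boolean combination of formulas $t(x)\le t'(x)$, $t(x)=\infty$ and $R_{p^{k}}(t(x))$ with $t,t'$ compositions of powers of $f_{p}$; under the hypotheses each of these defines a finite or cofinite subset of $\Gamma$, which is minimality.
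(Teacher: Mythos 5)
Your proposal is correct and is essentially the paper's own (very terse) argument: deduce everything from the preceding theorem and translate minimality of the $\mathcal L_v$-structure on $\Gamma$ into the stated conditions on the $R_{p^n}$, excluding the analogue of (3)(b) in the Pr\"ufer case by Lemma \ref{bounded} (the paper invokes the bound $|\bar B(v(g))/\bar B(v(pg))|=|G[p]|$ directly, while you use the eventual invariance of the level function under the right shift --- a minor variant resting on the same lemma). Two small points: your $h(\gamma)$ must be $\log_p$ of the \emph{order} of $\bar B(\gamma)/B^o(\gamma)$, not its exponent, for $R_{p^n}(\gamma)\Leftrightarrow h(\gamma)>n$ to hold; and the sufficiency step you flag as the main obstacle is simpler than you fear, since under the hypotheses every $R_m$ and $f_n$ occurring in a given formula is definable with parameters in the pure order $(\Gamma,\le)$ of type $\omega^*$, whose minimality then settles the matter without any quantifier-elimination machinery.
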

\begin{proof} Clearly the conditions are necessary for $\Gamma$ to be minimal as an $\mathcal L_v$-structure; it is easy to see that they are also sufficient. Note that the analogue of option (3)(b) cannot occur in case (4), as $|\bar B(v(g))/\bar B(v(pg))|$ remains bounded by Lemma \ref{bounded}.\end{proof}

Note that all cases can be easily realised as a valued abelian group, which hence must be almost linear minimal. In particular, we obtain examples of almost linear minimal groups which are elementary abelian of exponent $p$ (for any prime $p$) as well as examples of almost linear minimal torsion groups of infinite exponent, namely finite products of Pr\"ufer $p$-groups (for any fixed prime $p$). This shows that the conclusion of Theorem \ref{T3} is strongest possible.

\end{document}